\begin{document}

\title{Shot noise, weak convergence and diffusion approximations}

\author{Massimiliano Tamborrino \and Petr Lansky}
\authorrunning{Tamborrino, Lansky} 

\institute{M. Tamborrino \at Department of Statistics, University of Warwick, Gibbet Hill Road, CV4 7AL, Coventry United Kingdom\\
\email{massimiliano.tamborrino@warwick.ac.uk}
\and P. Lansky \at Institute of Physiology, Academy of Sciences of the Czech Republic,Videnska 1083,
Prague 4, 142 20, Czech Republic\\
\email{petr.lansky@fgu.cas.cz}}
\date{}
\maketitle

 \begin{abstract} 
Shot noise processes have been extensively studied due to their mathematical properties and their relevance in several applications.  Here, we consider nonnegative shot noise processes and prove their weak convergence
to L{\'e}vy-driven Ornstein-Uhlenbeck (OU), whose features depend on the underlying jump distributions. {Among others, we obtain the OU-Gamma and OU-Inverse Gaussian processes, having gamma and inverse gaussian processes as background L\'evy processes, respectively.} Then, we derive the necessary conditions guaranteeing the diffusion limit to a Gaussian OU process, show that they are not met unless allowing {for negative jumps} happening with probability going to zero, and quantify the error occurred when replacing the shot noise with the OU process {and the non-Gaussian OU processes}. The results offer a new class of models to be used instead of the commonly applied Gaussian OU processes to approximate synaptic input currents, membrane voltages or conductances modelled by shot noise in single neuron modelling. 
\keywords{
Diffusion processes\and  Weak convergence \and L{\'e}vy processes \and L{\'e}vy-driven Ornstein-Uhlenbeck \and Non-Gaussian Ornstein-Uhlenbeck \and {Ornstein-Uhlenbeck-Gamma process\and Ornstein-Uhlenbeck-Inverse Gaussian process}
\and diffusion approximation\and {single neuron modelling}
}
\end{abstract}

\section{Introduction}
Shot noise processes, initially proposed to model shot noise in vacuum tubes \cite{Schottky1918}, {have been generalized} and used to model various phenomena in several areas of applications. An incomplete list includes anomalous diffusion in physics, earthquakes occurrences in geology, rainfall modelling in meteorology, network traffic in computer sciences, insurance, finance and neuroscience; see \cite{Iksanovetal2017} and
references therein for an exhaustive overview. Here, we focus on neuroscience, and in particular on single neuron modelling, where shot noise processes known as Stein's neuronal model \cite{Stein1965} {without inhibition were initially proposed} to describe membrane voltage in the framework of Leaky Integrate-and-Fire models \cite{TuckwellBook,GerstnerKistler2002}. More recently, shot noise processes have been used to describe synaptic input currents in Integrate-and-Fire neurons \cite{DL} and Leaky Integrate-and-Fire models \cite{Olmietal2017,HohnBurkitt,Iyeretal2013}, as well as   conductances in conductance-based models \cite{RichardsonGerstnerChaos}.  Since the mathematical formulation of the shot noise process is the same, the results derived in this paper do not depend on the underlying modelling framework, and can therefore be applied to all considered scenarios. 

Suppose that events (e.g. jumps representing the excitatory postsynaptic potentials) occur according to a homogeneous Poisson process $N$ with constant rate $\lambda>0$. Associated with the $k$th event is a positive random variable $J^{[k]}$,
quantifying the nonnegative random jump amplitude of $k$th input. Denote by $\tau_k$ the time of the $k$th jump. Assume jumps {$J^{[k]}$} to be identically distributed,  independent of each other and of the point process. Then, the Poissonian shot noise process $\{X(t),t\geq 0\}$, or simply {\em shot noise process}, is the resulting superposition
\begin{equation}\label{1}
X(t)=x_0e^{-\alpha t}+\sum_{k=1}^{N(t)} {J^{[k]}} e^{-\alpha(t-\tau_k)}, \qquad X(0)=x_0\geq 0,
\end{equation}
where $\alpha>0$ is a constant determining the exponential decay rate, e.g. the
inverse of the membrane time constant when $X(t)$ models the membrane voltage of a neuron. In general, when the distribution of $J$ assumes real values, the model is known as Stein's model \cite{Stein1965} in the neuroscience literature.

Limiting results have been proposed for the drifted, rescaled in time, real {valued} and normalized shot noise process or its generalised versions, see e.g. \cite{Iksanovetal2017,PangZhou2018}. Here, instead,   
 we are interested in deriving limiting results for the original nonnegative shot noise process \eqref{1}, since this process,  and {not its variants}, plays an important role in Leaky Integrate-and-Fire and conductance based models.  From a mathematical point of view, this is commonly done by investigating a sequence of jump processes $(X_n)_{n\in\mathbb{N}}=(\{X_n(t),t\geq 0\})_{n\in\mathbb{N}}$ for which the distribution of the trajectories gets close to that of a {limiting} process $Y$. 
{Depending on the assumptions} on the frequencies and on the distributions of the jumps of the underlying stochastic point process, the limit process can be either deterministic, obtained, e.g., as a solution of systems of ordinary/partial differential equations \cite{Kurtz1970,Kurtz1971,PTW,RTW,SL2017}, or stochastic {\cite{SL2017}},\cite{BilConv,Jacod,Ricciardi}. Limits of the first type are called fluid limits, thermodynamic limits, hydrodynamic limits or Langevin approximations, and give rise to what is also known as Kurtz approximation \cite{Kurtz1971}. In this setting,  the frequency of jumps is {assumed} roughly inversely proportional to the jump size and the noise term is {assumed} proportional to ${1}/{\sqrt{n}}$, so the approximation of a jump with a diffusion process holds for fixed $n$, leading to a deterministic process as $n\to\infty$.
 
In this paper, we focus on limits  of the second type, 
dealing with {\em weak convergence} of stochastic processes \cite{BilConv,Jacod,GikhmanSkorokhod}, illustrated, e.g., in \cite{LanskyLanska1987,TSJ2014} in the neuronal context. In this setting, jump amplitudes {are assumed to decrease} to zero for $n\to\infty$, but occur at an increasing frequency roughly inversely proportional to the square of the jump size. Limiting results are proved by showing the convergence of triplets of characteristics, as proposed in \cite{Jacod} and illustrated, e.g., in \cite{TSJ2014}. An alternative proof for one dimensional processes converging to diffusion processes is that provided by Gikhman and Skorokhod \cite{GikhmanSkorokhod}. 
The {necessary conditions for the convergence of jump processes to diffusion processes}  are that the limits of the first two infinitesimal moments (also known as Kramers-Moyal coefficients) of the jump process converge to those of the {diffusion} process, and that the fourth infinitesimal moment goes to zero \cite{GikhmanSkorokhod,KarlinTaylor}. The vanishing of the fourth infinitesimal moment guarantees that the limit process is continuous \cite{Pawula}, a necessary condition for obtaining {diffusion processes}. In mathematical neuroscience, such approach was often studied by Ricciardi  and his coworkers \cite{CapocelliRicciardi1971,Ricciardi1976}, and has been referred to as {\em diffusion approximation} \cite{TuckwellBook,Ricciardi}. {A problem arises} for the use of several notions of diffusion approximations. {For example, the previously mentioned Langevin or Kurtz approximation, leading to a deterministic process as $n\to\infty$, is also known as diffusion approximation.}  {An alternative concept of diffusion approximation, called {\em usual approximation} or truncated Kramers-Moyal expansion, was for discontinuous models {with relatively small jumps} \cite{Walsh}. 
 This method replaces the discontinuous process $X$ with a diffusion process $Y$ having the same first two infinitesimal moments.} Another notion, which we will refer to as {\em Gaussian approximation} or matched synaptic distributions \cite{Iyeretal2013,RichardsonGerstnerChaos}, consists in replacing the discontinuous process $X$ with a Gaussian process $Y$  having the same first two infinitesimal moments. Since these two notions involve neither limiting results nor infinitesimal moments of order higher than two, the limit process may also be discontinuous,  yielding thus a low approximation accuracy,  as shown in Section 4.2.3. 
Over the years, the rigorous approach of diffusion approximation by Ricciardi, mathematically supported by the findings of  Gikhman and Skorokhod \cite{GikhmanSkorokhod}, has been replaced by the {usual and Gaussian approximations} in the mathematical and computational neuroscience community, see e.g. \cite{DL,RichardsonGerstnerChaos,SchwalgerDrosteLindner}. 

The goals of {this paper} is to perform weak convergence of a sequence of nonnegative shot noise processes \eqref{1} and investigate if, and under which conditions for the amplitudes and frequencies of the jumps, a sequence of them  admits a {diffusion process} as limit.  First, we show that the obtained limit process is a (discontinuous) non-Gaussian OU process,  {also known as L{\'e}vy-driven OU} or OU L{\'e}vy process \cite{levyOU}, i.e.  an OU process having a non-Gaussian L{\'e}vy process as driving noise,  as the initial shot noise process. Then, we characterise the limiting L{\'ev}y measures which depend on the jump amplitude distributions of the shot noise, {showing that the OU-Poisson, OU-Gamma, OU-Inverse Gaussian  and OU-Beta process can {be obtained as} limit processes having a Poisson, gamma, inverse gaussian and beta process as background driving L\'evy processes, respectively.}
 We refer to \cite{Bertoin1996,Sato1999} as standard references on L{\'e}vy processes. Moreover, we derive the necessary conditions to {perform a diffusion approximation} and show that these are not simultaneously met, as expected by the non negativity of the jumps. Hence,  the Gaussian OU process cannot be obtained as a diffusion approximation of the shot noise process, {but only as a usual or Gaussian approximation} \cite{DL,RichardsonGerstnerChaos,MelansonLongtin2019}. However, we prove that {modifying} the assumption of nonnegative jumps and allowing for jumps with negative amplitudes happening with probability going to 0 is enough to guarantee the weak convergence to a Gaussian OU process as diffusion limit.

On one hand, our results show {how the different limit approaches and notions of diffusion approximation may lead to different approximating models. {In particular, since the usual and Gaussian approximations do not check the behaviour of the fourth infinitesimal moment, they implicitly assume that the limit process is continuous, while instead the non vanishing of the fourth infinitesimal moment results in a discontinuous limit process.}
  On the other hand, they may be used to improve existing results on single neuronal models and their corresponding firing statistics, by replacing the membrane voltage, the synaptic input currents or the conductances modelled by the shot noise with the obtained L{\'evy}-driven  (and not Gaussian) OU processes,   {as illustrated here. The combination of deriving explicit expressions for the conditional mean,  variance and characteristic function,  the possibility to simulate these processes exactly via a provided package written in the computing environment R \cite{R} and their outperformance over the OU process as approximating models for the shot noise makes the derived non-Gaussian OU processes a powerful and tractable class of models.} {Our findings} are not specific for the shot noise process, but can be directly applied to all those models where a diffusion approximation involving sequences of nonnegative and/or nonpositive random variables is  {commonly used}, e.g.  neuronal models with synaptic reversal potentials \cite{LanskyLanska1987}, as
previously observed in \cite{Cupera2014}.  
 
\section{Poissonian shot noise}
{Unless otherwise specified}, we consider a {nonnegative} shot noise process \eqref{1} whose random jump amplitudes {$J^{[k]}, k\in\mathbb{N}$} are independent and identically distributed {nonnegative} random variables, independent on the Poisson process $N(t)$. We denote by $J$ the {amplitude} of the jumps, with cumulative distribution function (cdf) $F_J$ {and probability mass function (pmf)/probability density function (pdf) $f_J$}. 
The shot noise process \eqref{1} is obtained as the solution of the stochastic differential equation
\begin{equation}\label{sde}
dX(t)=-\alpha X(t) dt + JdN(t), \qquad X(0)=x_0\geq 0,
\end{equation}
and has state space $[0,\infty)$. {Denote by}
\[
L(t)=\sum_{k=1}^{N(t)}{J^{[k]}},
\]
a compound Poisson process with $L(0)\equiv 0$, i.e. a L{\'e}vy process with finite (bounded) L{\'e}vy measure {$\lambda F_J(dx)$}, drift 0 and no diffusion component (the Gaussian part). {Thanks to the L\'evy-Khintchine decomposition \cite{Bertoin1996,Sato1999}}, the characteristic function of $L(t)$ {is} {\cite{Bertoin1996,Sato1999}}
\begin{equation}\label{charL}
\mathbb{E}[e^{iuL(t)}]=\exp\left(t \int_{{0}}^{{\infty}} (e^{iux}-1)\lambda F_J(dx)\right),
\end{equation}
while that of $X_t$ is  {\cite{Bertoin1996,Sato1999}}
\begin{equation}\label{mgf}
\mathbb{E}[e^{iuX(t)}]=\exp\left(\int_{{0}}^{{\infty}} (e^{iu xe^{-\alpha y}}-1)dy \lambda F_J(dx)+iux_0e^{-\alpha t}\right),
\end{equation}
{These expressions can be rewritten using densities instead of measures, using the fact that $\lambda F_j(dx)=\lambda f_J(x)dx$.} 
The mean,  covariance and variance of the shot noise process are given by \cite{Ross}
\begin{eqnarray}
\label{mom1}\mathbb{E}[X(t)]&=&\frac{\lambda}{\alpha}\mathbb{E}[J](1-e^{-\alpha t})+ x_0e^{-\alpha t},\\
\label{mom3}\textrm{Cov}(X(t),X(t+s))&=&\frac{\lambda}{2\alpha}e^{-\alpha s}\mathbb{E}[J^2](1-e^{-2\alpha t}),\quad {s>0,}\\
\label{mom2}\textrm{Var}(X(t))&=&\frac{\lambda}{2\alpha}\mathbb{E}[J^2](1-e^{-2\alpha t}).
\end{eqnarray}
The fourth moment of $X$ can be obtained from the characteristic function, yielding
\begin{eqnarray*}
\mathbb{E}[X^4(t)]&=&\mathbb{E}^4[X(t)]+6\mathbb{E}^2[X(t)]\textrm{Var}(X(t))+3\textrm{Var}^2(X(t))
+\frac{4\lambda}{3\alpha}\mathbb{E}[X(t)]{\mathbb{E}[J^3]}(1-e^{-3\alpha t})\\
&+& \frac{\lambda}{4\alpha}\mathbb{E}[J^4](1-e^{-4\alpha t}).
\end{eqnarray*}

\section{{L{\'e}vy-driven OU process as limit model of the {nonnegative} shot noise}}\label{Section4}
Let us consider a sequence of compound Poisson processes $(L_n)_{n\in\mathbb{N}}$ with jump distribution $F_{J_n}$ under  the assumptions that 
\begin{equation}\label{lambda}
\lim_{n\to\infty}\lambda_n =\infty
\end{equation}
and
 \begin{equation}\label{weak}
{\lambda_n F_{J_n}\stackrel{\mathcal{L}}{\to}\nu},
\end{equation}
where $\nu$ is an unbounded L{\'e}vy measure and the convergence in law $\mathcal{L}$ in \eqref{weak} is defined such that
\[
\int h(x)\lambda_nF_{J_n}(dx)\to \int h(x)\nu(dx), \qquad n\to\infty,
\]
for all bounded and continuous functions $h$, differentiable at $x=0$ with $h(0)=0$. {When the L\'evy measure can be rewritten as $\nu(dx)=u(x)dx$, where $u(x)$ is the L\'evy density also known as L\'evy-Khintchine density  \cite{BarndorffNielsenShephard}, condition \eqref{weak} can be rewritten in terms of densities as
\[
\lambda_n f_{J_n}\stackrel{\mathcal{L}}{\to}u.
\]
}

Then, we have the following
\begin{theorem}\label{Theo1}
Under conditions \eqref{lambda} and \eqref{weak}, the sequence of shot noise processes $(X_n)_{n\in\mathbb{N}}$  converges weakly to a nonnegative L{\'e}vy-driven OU process $Y=\{Y(t),t\geq 0\}$ given as the solution of the stochastic differential equation \begin{equation}\label{Levy}
dY(t)=-\alpha Y(t)dt + dL_\infty(t), \qquad Y(0)=x_0\footnote{If the original and the limit process do not start from the same position, we require that $\lim\limits_{n\to\infty} x_{0n}=y_0$ and $Y(0)=y_0$, as done, for example, in \cite{TSJ2014}.},
\end{equation}
where $(L_n)_{n\in\mathbb{N}}$  converge weakly to {a L\'evy process ${L_\infty}$ with characteristic triplet $(0,0,\nu)$}, i.e., $L_n\stackrel{\mathcal{L}}{\to} L_\infty$.
\end{theorem}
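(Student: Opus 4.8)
The plan is to realise the shot noise process as a fixed, Skorokhod‑continuous image of its driving Lévy process, prove that these driving processes converge weakly, and then invoke the continuous mapping theorem. First I would note that $X_n$ is the unique solution of $dX_n(t)=-\alpha X_n(t)\,dt+dL_n(t)$, $X_n(0)=x_0$, and that variation of constants gives $X_n(t)=x_0e^{-\alpha t}+\int_0^t e^{-\alpha(t-s)}\,dL_n(s)$; since $s\mapsto e^{-\alpha s}$ is continuous and of finite variation, integration by parts (no bracket term) rewrites this \emph{pathwise} as $X_n(t)=x_0e^{-\alpha t}+L_n(t)-\alpha\int_0^t e^{-\alpha(t-s)}L_n(s)\,ds=:\Phi(L_n)(t)$ for a deterministic map $\Phi\colon D[0,\infty)\to D[0,\infty)$ independent of $n$ — that $\Phi(L_n)$ agrees with \eqref{1} follows by differentiating the series in \eqref{1} to recover \eqref{sde}. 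Applying $\Phi$ to $L_\infty$ yields $Y$ as in \eqref{Levy}, so it suffices to show (i) $L_n\stackrel{\mathcal{L}}{\to}L_\infty$ in $D[0,\infty)$ with the $J_1$ topology and (ii) that $\Phi$ is $J_1$‑continuous.

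For (i) I would work through characteristic functions: each $L_n$ is a Lévy process with exponent $\psi_n(u)=\int_0^\infty(e^{iux}-1)\,\lambda_nF_{J_n}(dx)$ as in \eqref{charL}, the integrand being bounded, continuous and differentiable at $0$ with value $0$ there, so \eqref{weak} gives $\psi_n(u)\to\int_0^\infty(e^{iux}-1)\,\nu(dx)=:\psi_\infty(u)$ for each $u$. This integral converges because \eqref{weak} tested against, e.g., $h(x)=1-e^{-x}$ forces $\int_0^1 x\,\nu(dx)<\infty$, so $\psi_\infty$ is the exponent of a bona fide (finite‑variation, nonnegative) Lévy process $L_\infty$ with triplet $(0,0,\nu)$. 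Lévy's continuity theorem then gives $L_n(1)\stackrel{d}{\to}L_\infty(1)$, and since all the processes are Lévy this upgrades — by stationarity and independence of increments, and the functional limit theorem for processes with independent increments \cite{Jacod}, tightness being automatic within the Lévy class — to $L_n\stackrel{\mathcal{L}}{\to}L_\infty$ in $D[0,\infty)$.

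For (ii) I would write $\Phi(\ell)=x_0e^{-\alpha\cdot}+\ell-\alpha\,A(\ell)$ with $A(\ell)(t)=\int_0^t e^{-\alpha(t-s)}\ell(s)\,ds$: the constant term is harmless, $\ell\mapsto\ell$ is trivially continuous, and the only real point is that $A$ maps $J_1$‑convergent sequences to \emph{uniformly} convergent ones with continuous limits. Indeed, if $\ell_n\to\ell$ in $J_1$ on $[0,T]$ then $\sup_n\|\ell_n\|_{\infty,[0,T]}<\infty$ and $\ell_n(s)\to\ell(s)$ at every continuity point of $\ell$, hence Lebesgue‑a.e., so bounded convergence gives $\int_0^T|\ell_n-\ell|\,ds\to0$ and thus $\|A(\ell_n)-A(\ell)\|_{\infty,[0,T]}\le\int_0^T|\ell_n-\ell|\,ds\to0$ with $A(\ell)$ continuous. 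Since adding a uniformly convergent sequence with continuous limit preserves $J_1$‑convergence, $\Phi(\ell_n)\to\Phi(\ell)$ in $J_1$, so $\Phi$ is continuous on $D[0,\infty)$; combining (i), (ii) and the continuous mapping theorem yields $X_n=\Phi(L_n)\stackrel{\mathcal{L}}{\to}\Phi(L_\infty)=Y$. Finally $Y$ is nonnegative since $x_0\geq0$ and $\nu$ is carried by $(0,\infty)$, so the jumps of $L_\infty$ are positive and \eqref{Levy} keeps $Y$ in $[0,\infty)$.

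The hard part is step (i): converting the weak convergence \eqref{weak} of the finite Lévy measures into \emph{functional} convergence of the Lévy processes, which needs $\psi_\infty$ to be a genuine (here finite‑variation) Lévy exponent and $(L_n)$ to be tight in $D[0,\infty)$ — both standard within the Lévy class, but exactly where the hypotheses ($\nu$ unbounded, supported on $(0,\infty)$, no Gaussian part) enter. A fully equivalent alternative, closer to \cite{Jacod,TSJ2014}, would bypass $L_n$ and work directly with the semimartingale characteristics of $X_n$ — drift $-\alpha\int_0^\cdot X_n(s)\,ds$, null continuous part, jump compensator $\lambda_nF_{J_n}(dx)\,dt$ — check from \eqref{lambda}--\eqref{weak} that these converge to the characteristics associated with \eqref{Levy}, and invoke the convergence theorem for semimartingales; the continuous‑mapping route is preferable because it sidesteps the attendant Lindeberg‑type smallness and limit‑uniqueness conditions.
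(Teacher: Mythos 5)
Your proposal is correct and follows essentially the same route as the paper's proof: weak convergence $L_n\stackrel{\mathcal{L}}{\to}L_\infty$ obtained from convergence of the characteristic exponents/triplets (as in \cite{Jacod}), followed by the continuous mapping theorem applied to the map taking the driving L\'evy process to the solution of \eqref{sde}. The only difference is one of detail: you make explicit the $J_1$-continuity of the functional $\Phi$ (via the integration-by-parts representation) and the integrability $\int_0^1 x\,\nu(dx)<\infty$ needed for $\psi_\infty$, points the paper simply asserts or leaves implicit.
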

\begin{proof} 
The proof is reported in Section \ref{AppendixB}.
\end{proof}
The shot noise process would  converge weakly to a Gaussian limit if $L_n\stackrel{\mathcal{L}}{\to} B$, a standard Brownian motion, which would require {$\lambda_n F_{J_n}(dx)\stackrel{\mathcal{L}}{\to}0$} or 
\[
\mathbb{E}[e^{iuL_n(t)}]\to e^{-tu^2/2}.
\]
{Intuitively, this cannot happen in the {absence} of negative jumps since, e.g., $L_\infty$ is an increasing process  while the Brownian motion is not. 
This is confirmed by the following
\begin{theorem}\label{Theonew}
The limiting L\'evy-driven OU process is not of Gaussian type.
\end{theorem}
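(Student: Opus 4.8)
The plan is to argue by contradiction, exploiting the fact that an Ornstein--Uhlenbeck process is ``of Gaussian type'' (i.e.\ a Gauss--Markov process) precisely when its background driving L\'evy process is a Brownian motion with drift, equivalently when the L\'evy measure in the characteristic triplet of that driver vanishes. By Theorem~\ref{Theo1} the driver of the limit process $Y$ is the L\'evy process $L_\infty$ with characteristic triplet $(0,0,\nu)$, and by the standing assumption \eqref{weak} the measure $\nu$ is \emph{unbounded}; in particular $\nu\neq 0$. Hence $L_\infty$ is not a Brownian motion with drift. The only point that then requires an argument is that this non-Gaussianity is inherited by $Y$ itself, i.e.\ that the deterministic exponential filtering in \eqref{Levy} cannot produce a Gaussian process out of a non-Gaussian infinitely divisible driver.

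To settle that point I would use the analogue of \eqref{mgf} for the limit process: the characteristic function of $Y(t)$ equals $\exp(\Psi_t(u))$, where $\Psi_t$ is the L\'evy--Khintchine exponent of an infinitely divisible law whose L\'evy measure $\nu_t$ is the image of the measure $\nu(dx)\,dy$ on $(0,\infty)\times(0,t)$ under the map $(x,y)\mapsto x e^{-\alpha y}$. Since on the finite interval $(0,t)$ the factor $e^{-\alpha y}$ stays bounded away from $0$, this map only rescales mass and does not annihilate it, so $\nu\neq 0$ forces $\nu_t\neq 0$ for every $t>0$ (indeed $\nu_t$ is again unbounded). As a Gaussian law has zero L\'evy measure, uniqueness of the L\'evy--Khintchine representation shows that $Y(t)$ cannot be Gaussian for any $t>0$, so $Y$ is not of Gaussian type. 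An equivalent, more pathwise, route is to recover the driver from the filtered process through $L_\infty(t)=Y(t)-Y(0)+\alpha\int_0^t Y(s)\,ds$: if all finite-dimensional distributions of $Y$ were Gaussian, so would be those of $L_\infty$, which are $L^2$ (hence a.s.\ along a subsequence) limits of linear combinations of the $Y(s)$; this would force $L_\infty$ to be Brownian motion with drift, contradicting $\nu\neq 0$.

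I expect the only genuine subtlety to be exactly this transfer step, and within it the need to handle the truncation term in the L\'evy--Khintchine exponent of $Y(t)$ carefully when $\nu$ carries infinite mass near the origin, so that the integral defining $\Psi_t$ is read in the compensated sense; everything else is bookkeeping. It is also worth recording the complementary heuristic already anticipated before the statement, which becomes a rigorous alternative in the present nonnegative-jump setting: each prelimit process $L_n(t)=\sum_{k=1}^{N_n(t)}J_n^{[k]}$ is a.s.\ nondecreasing, nondecreasingness of c\`adl\`ag paths is preserved under weak convergence in the Skorokhod topology, so $L_\infty$ is a nontrivial subordinator and therefore patently not a Brownian motion; the OU process it drives is then visibly non-Gaussian.
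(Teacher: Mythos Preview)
Your proposal is correct but takes a genuinely different route from the paper. The paper argues entirely at the level of path properties via the Portmanteau theorem: it introduces the closed set $A=\{w\in\mathcal{D}(\mathbb{R}^+):\inf_{s\in[0,1]}w(s)=w(0)\}$, observes that each $L_n$ has nondecreasing paths so $\mathbb{P}(L_n\in A)=1$, while any regular diffusion (in particular Brownian motion) satisfies $\mathbb{P}(B\in A)=0$; weak convergence $L_n\to B$ would then violate the Portmanteau inequality $\limsup_n\mathbb{P}(L_n\in A)\leq\mathbb{P}(B\in A)$. Your main argument is instead analytic: you read off from Theorem~\ref{Theo1} that the limiting driver has characteristic triplet $(0,0,\nu)$ with $\nu$ unbounded by assumption~\eqref{weak}, hence nonzero, and invoke uniqueness of the L\'evy--Khintchine representation to conclude that $L_\infty$ is not Gaussian; you then transfer this to $Y$ via the image measure $\nu_t$ or the inversion formula $L_\infty(t)=Y(t)-Y(0)+\alpha\int_0^tY(s)\,ds$. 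Your approach has the merit of making the non-Gaussianity of $Y$ itself (rather than just of its driver) explicit, a step the paper leaves implicit; the paper's approach is shorter and uses only the sign constraint on the jumps rather than the full strength of~\eqref{weak}. Your closing ``complementary heuristic'' about nondecreasingness being preserved under Skorokhod weak limits is in fact precisely the paper's argument, made rigorous there through the Portmanteau theorem.
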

\begin{proof}
The proof is reported in Section \ref{AppendixC}.
\end{proof}}
Hence, in the presence of {nonnegative} jumps happening at high frequency with amplitudes going to zero, the nonnegative shot noise process may be replaced by a L\'evy-driven {non-Gaussian}  OU process. {The process $L_\infty$ is called background driving L\'evy process \cite{ContTankov2004} and it acts as driving noise of $Y$. Since the jumps are nonnegative, the process $L_\infty$ is also a subordinator, i.e. a L\'evy process with no diffusion component, nonnegative drift (being null) and positive increments (the jumps), whose distribution characterises both $L_\infty$ and $Y$.}

{
\subsection{Properties of the limit non-Gaussian OU processes} 
The characteristic functions of the limit $L_\infty$ and $Y$ processes have the same expression as \eqref{charL} and \eqref{mgf}, respectively, where $\lambda F_j(dx)$ is replaced by $\nu(dx)=u(x)dx$. Moreover, the mean, covariance and variance of the non-Gaussian OU limit process are given by 
\begin{eqnarray}
\label{meanOUL}\mathbb{E}[Y(t)]&=&\frac{\mathbb{E}[L_\infty(1)]}{\alpha}(1-e^{-\alpha t})+ x_0e^{-\alpha t},\\
\nonumber\label{mom3b}\textrm{Cov}(Y(t),Y(t+s))&=&\frac{\mathbb{E}[L^2_\infty(1)]}{2\alpha}e^{-\alpha s}(1-e^{-2\alpha t}),\quad {s>0,}\\
\label{varOUL}\textrm{Var}(Y(t))&=&\frac{\mathbb{E}[L^2_\infty(1)]}{2\alpha}(1-e^{-2\alpha t}),
\end{eqnarray}
where the underlying increments have mean $\mathbb{E}[L_\infty(1)]=\int_0^\infty x\nu(dx)=\int_0^\infty xu(x)dx$ and second moment $\textrm{E}[L^2_\infty(1)]=\int_0^\infty x^2u(x)dx$. 
}

The limiting L\'evy {densities of the subordinator $L_\infty$} obtained under different jump distributions  are reported in Table \ref{Table1}, {while their calculations are given in  \ref{prooflevy}.} If the jumps $J_n$ are Bernoulli or Poisson distributed, the resulting L{\'e}vy measure is that of a  PP; if the jumps are $\chi^2$ or gamma distributed, the resulting L{\'e}vy measure is that of a GP; if the jumps are inverse Gaussian distributed, the resulting L\'evy measure is that of an IGP, while beta distributed jumps yield the L{\'evy} measure of a BP. {Note that the $\chi^2$ is a special case of the gamma distribution for rate parameter $1/2$, i.e. for $\widetilde\sigma^2=2\mu$. Following \cite{Quetal2020,Quetal2021}, we talk about $Y$ being OU-Poisson, OU-Gamma, OU-inverse Gaussian (OU-IG) or OU-Beta process depending on whether the underlying subordinator $L_\infty$ is a Poisson process (PP), gamma process (GP), inverse Gaussian process (IGP) or beta process (BP) \cite{Brodericketal}, that is, $L_\infty(1)$, the process at time one, follows a Poisson, gamma, inverse Gaussian or beta distribution, respectively.} In particular, we obtain  
\begin{enumerate}
\item PP with L{\'e}vy measure $\nu(dx)=\mu \delta(x-1)dx$, where $\delta$ denotes the Dirac delta function. This is a PP with jump $1$ and intensity $\mu$. {Moreover, $\mathbb{E}[L_\infty(1)]=\mathbb{E}[L^2_\infty(1)]=\mu$.}
\item GP  with L{\'e}vy measure $\nu(dx)=\widetilde\alpha x^{-1}e^{-\beta x}dx$ concentrated on $(0,\infty)$ and independent gamma distributed increments with {shape parameter $\widetilde\alpha>0$ and rate parameter $\beta>0$, with $\mathbb{E}[L_\infty(1)]=\widetilde\alpha/\beta$ and $
\mathbb{E}[L^2_\infty(1)]=\widetilde\alpha/\beta^2$.  Looking at Table \ref{Table1}, we see that $(\widetilde\alpha,\beta)=(\mu/2,1/2)$ and $(\mu^2/\sigma^2,\mu/\sigma^2)$, depending on whether the jumps $J_n$ are $\chi^2$ or gamma distributed, respectively. Thus, in both cases $\mathbb{E}[L_\infty(1)]=\mu$, with $\mathbb{E}[L^2_\infty(1)]=\mu$ for the $\chi^2$ and $\mathbb{E}[L^2_\infty(1)]=\widetilde\sigma^2$ for the gamma case.}
\item IGP with  L{\'e}vy measure $\nu(dx)=se^{-b^2x/2}/\sqrt{2\pi x^3}dx$ concentrated on $(0,\infty)$ and inverse Gaussian distributed increments with mean $\widetilde \mu=s/b$ and shape $\widetilde\lambda=s^2$, 
{with $\mathbb{E}[L_\infty(1)]=\widetilde\mu=s/b$ and $\mathbb{E}[L^2_\infty(1)]=\widetilde\mu^3/\widetilde\lambda=s/b^3$.}
\item BP with 
L{\'e}vy measure {$\nu(dx,d\mu)=\mu\beta x^{-1}(1-x)^{\beta-1}dxd\mu$ concentrated in $(0,1)$, where $u(x)=\beta x^{-1}(1-x)^{\beta-1}$ is an improper beta function, $\mu$ is the base measure and $\beta$ is the concentration parameter, see \cite{Brodericketal} for more details on BPs and their decomposition. The underlying increments are beta distributed with $\mathbb{E}[L_\infty(1)]=\int_0^1 x \mu u(x)dx=\mu$ and $\mathbb{E}[L^2_\infty(1)]=\int_0^1 x^2\mu u(x)dx=\mu/(\beta+1)$.}
\end{enumerate}
{Note that $\mathbb{E}[L_\infty(1)]$ and $\mathbb{E}[L^2_\infty(1)]$ equal the first and second limiting infinitesimal moments reported in Table \ref{Table1}, obtained via \eqref{c1} and \eqref{c2}, respectively.}
If $J_n$ are $\chi$, generalised gamma, beta and beta prime distributed, the derived L\'evy measures  do not correspond to a known process (results not shown). 
\begin{remark}
When looking at the considered distributions meeting conditions \eqref{lambda} and \eqref{weak} and yielding a stochastic process as limit, we realise that they are all member of the exponential family. That is, $X$ is a member of a k-parameter exponential family with state space $S\subseteq[0,\infty)$ if there exists $k\in\mathbb{N}$, functions $c,\xi_1,\ldots, \xi_k:\Theta\to\mathbb{R}$, real-valued statistics $ T_1,\ldots, T_k: S\to\mathbb{R}$ and a function $h:S\to[0,\infty)$ such that the probability mass/density function of $X$  can be factorised as
\[
f_n(\theta):=h(x)c(\theta)\exp(\sum_{j=1}^k\xi_j(\theta)T_j(x)),
\]
where $c(\theta)>0$ is a normalising constant and $\theta=(\theta_1,\ldots,\theta_d), d\leq k$, where $T(X)=(T_1(X),\ldots, T_k(X))$ is a sufficient statistic for $\theta$ \cite{CasellaBerger}. 
\end{remark}
{
When the jumps $J_n$ are Bernoulli or Poisson distributed, the limit process is an OU-Poisson process, i.e. a compound Poisson process with Poissonian jumps of size 1 and intensity $\mu$ or, analogously, a shot noise process with underlying Poissonian activity with intensity $\mu$ and constant jumps with amplitude 1.} 

{Differently from the Poisson process, the gamma, IG and beta processes are not compound Poisson processes since they have non-integrable L\'evy measures, i.e. $\int_0^\infty \nu(dx)=\int_0^\infty u(x)dx=\infty$ for the GP and the IGP, and $\int_0^1\nu(dx)=\int_0^1 u(x)dx=\infty$ for the BP. These processes  are known to be infinite activity processes, as they have an infinite number of very small jumps in any finite time interval  \cite{BarndorffNielsenShephard}.  Since the integral of $u(x)$ diverges at the origin, it is of interest to integrate the L\'evy density from $x$ rather than from zero. This leads to the definition of the L\'evy-Khinchin tail function \cite{BarndorffNielsenShephard}
\[
U(x)=\int_x^\infty u(s)ds
\]
for the GP and IGP, and $U(x)=\int_x^1 u(z)dz$ for the BP. The L\'evy-Khinchin tail defines the rate at which jumps of size greater than $x$ occur. Using Maple and the parameter values from Table \ref{Table1}, the L\'evy-Khinchin tail functions for the GP, IGP and BP, denoted $U_{GP}, U_{IGP}$ and $U_{BP}$, respectively, are given by
\begin{eqnarray*}
U_{GP}(x)&=&\frac{\mu^2}{\widetilde\sigma^2}Ei_1\left(\frac{\mu x}{\widetilde\sigma}\right),\\
U_{IGP}(x)&=&s\left[\sqrt{\frac{2}{\pi x}}\exp\left(-\frac{\mu x}{2\widetilde\sigma^2}\right)
-\sqrt{\frac{\mu}{\widetilde\sigma^2}}\textrm{Erfc}\left(\sqrt{\frac{\mu x}{2\widetilde\sigma^2}}\right)\right],\\
U_{BP}(x)&=&\mu\beta \left\{(\beta-1)[{}_3F_2(1,1,2-\beta;2,2;1)+{}_3F_2(1,2,2-\beta;2,2;x)] x-\ln(x)\right\},
\end{eqnarray*}
where $Ei_1(x)$ denotes the exponential integral given by
\[
Ei_1(x)=\int_1^\infty e^{-s x}s^{-1}ds, \qquad x>0,
\]
$\textrm{Erfc}(x)$ is the complementary error function and ${}_3F_2(a_1,a_2, a_3;b_1,b_2;x)$ is the generalised hypergeometric function given by
\[
{}_3F_2(a_1,a_2, a_3;b_1,b_2;x)=\sum_{n=0}^\infty\frac{(a_1)_n(a_2)_n (a_3)_n}{(b_1)_n (b_2)_n}\frac{x^n}{n!},
\]
where $(a)_n$ is the rising factorial defined by $(a_n)=a(a+1)\cdots (a+n-1)$ for $n\in\mathbb{N}$. These L\'evy-Khinchin tails, and thus rates, increase as $x$ decreases, exploding as $x\to 0$. However, for any finite small jump amplitude, all rates are finite. }

{Finally, in analogy to the diffusion approximation of Stein's model \cite{Lansky1984}, in \ref{AppA}, we explicitly provide $\mu$ and $\sigma^2$ as a function of the rate $\lambda_n$ and the parameters of the jump size distribution. This will allow to use the limiting model as an approximation of the shot noise for large but finite rates $\lambda_n<\infty$.} 

{
\subsection{Simulation of OU-Gamma and OU-IG processes}
Differently from the OU-Poisson process,  that can be simulated as the shot noise process, the OU-Gamma, OU-IG and OU-Beta processes have underlying infinite activity L\'evy processes. The presence of an infinite number of very small jumps in any interval makes their simulation difficult.  
Common simulation schemes are based on Rosi\'nski's infinite series representation \cite{Rosinski2001} or on the numerical inversion of the underlying characteristic function \cite{GlassermanLiu2010,Chenetal2012}. These methods are not exact since they introduce truncation, discretisation or round-off errors.}

{
Here, we rely on the exact simulation algorithms proposed in \cite{Quetal2020} and \cite{Quetal2021} to generate trajectories of the OU-Gamma and OU-IG processes, respectively. The idea of these algorithms is to perform an exact distributional decomposition, i.e. the processes are decomposed into the sum of several simple elements which preserve the distribution law and are easy to sample from. In particular, the OU-Gamma process is decomposed into the sum of 
one deterministic trend, one compound Poisson random variable, modelling the finite jumps, and one gamma random variable, modelling the infinite small jumps \cite{Quetal2020}. The OU-Gamma process in \cite{Quetal2020} is written as
\begin{equation}\label{YQu}
dY(t)=-\delta Y(t)dt+ \rho dZ(t), \qquad t\geq 0,
\end{equation}
where $Z(t)$ is a gamma process with L\'evy measure $\nu(dx)=\widetilde \alpha x^{-1}e^{-\beta x}dx$, corresponding to our $L_\infty(t)$. Hence, when using the algorithm, we set $\delta=\alpha, \rho=1$ and $(\widetilde\alpha,\beta)=(\mu^2/\widetilde\sigma^2,\mu/\widetilde\sigma^2)$ (cf. Table \ref{Table1}). \\
For the OU-IG process, its transition distribution is decomposed into the sum of one compound Poisson random variable, modelling the finite jumps, and one IG distribution, modelling the infinite small jumps \cite{Quetal2021}. The OU-IG process in \cite{Quetal2021} looks like \eqref{YQu}, where $Z(t)$ is now an IG process with L\'evy density $\nu(dx)=e^{-c^2x/2}/\sqrt{2\pi x^3}dx$. Comparing the different formulations, using the limiting L\'evy density in Table \eqref{1} and the properties of the IG distribution, we have that $\delta=\alpha, c=\sqrt{\mu/\widetilde\sigma^2}$ and $\rho=s=\sqrt{\mu^3/\widetilde\sigma^2}$. 
  The algorithms, written in Matlab and kindly provided by one of the authors, have been rewritten in the computing environment R \cite{R}, using the package Rcpp 
\cite{Rcpp}, which offers a seamless integration of R and C++, drastically reducing the computational time of the algorithms. A R-package called {\em shotnoise} will be made publicly available on github 
upon publication, providing also the codes for running Monte Carlo simulations for the shot noise process with all considered jump distributions.}

\begin{figure}
\centering
\includegraphics[width=.65\textwidth]{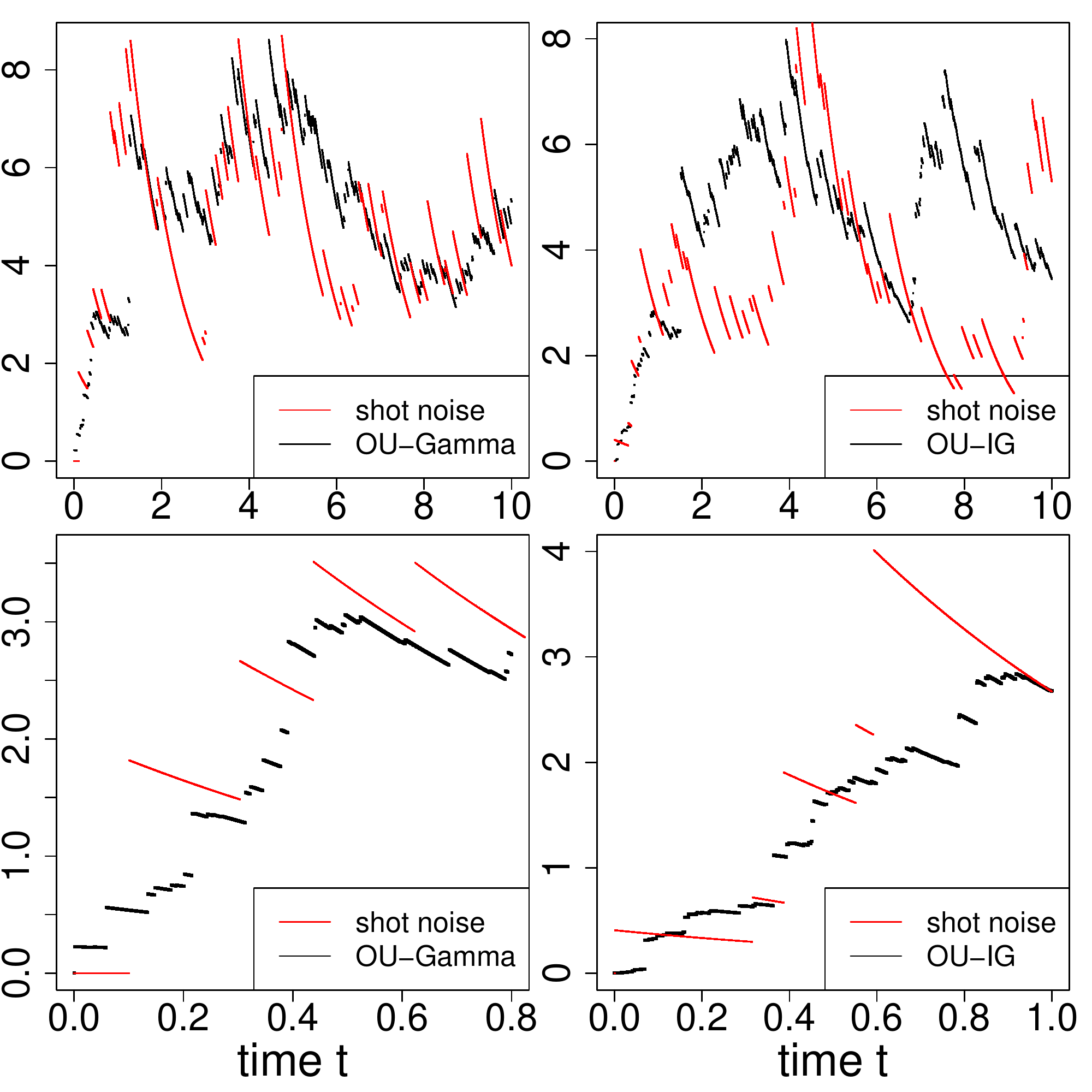}
\caption{{Simulated sample paths of L\'evy-OU processes (black lines) and shot noise (red lines) in $(0,10)$ (upper figures), with a zoom in (0,1) (bottom figures). Left panels: OU-Gamma process. Right panels: OU-IG process. Chosen parameters: $\mu=5, \widetilde\sigma^2=3, \lambda=5,\alpha=1, x_0=0$.
}}
\label{Figure0}
\end{figure}
{Sample paths of the OU-Gamma and the OU-IG processes simulated via the provided package are reported in Figure \ref{Figure0}, together with trajectories of the shot noise process with $\lambda=5$, i.e., not in asymptotic regime. The trajectories of the L\'evy-OU processes exhibit a combination of exponential decays, small jumps and large jumps, while small jumps are more rare/absent for the shot noise process. This difference becomes less visible when $\lambda$ increases, when the shot noise process is also characterised by frequent and small jumps.  
}

\section{Diffusion approximation of the shot noise process}
{Since a diffusion limit cannot be obtained for the considered sequence of nonnegative shot noise processes, as shown in Theorem \ref{Theonew}, we are now interested in deriving sufficient conditions guaranteeing this to happen. This allows us to  understand which condition is not met and, possibly, how to fulfil it by {modifying} some of the underlying assumptions. Alternatively to the convergence of the characteristic triplets,} we may consider the conditions for the weak convergence of one dimensional jump processes to diffusion processes provided by \cite{Ricciardi,GikhmanSkorokhod,KarlinTaylor}. 
 For a sequence of processes $(X_n)_{n\in\mathbb{N}}$, denote by $\Delta X_n(t)$ the increment in $(t,t+\Delta)$ {obtained by discretising the stochastic differential equation \eqref{sde} in $X_n(t)$ {with respect to time}}.  The $k$th infinitesimal moment of $X_n(t)$, denoted by $M_{k;X_n}(x), k\in\mathbb{N}$, is defined by
\begin{equation}\label{defM}
M_{k;X_n}(x):=\lim_{\Delta t\to 0^+}\frac{\mathbb{E}[(\Delta X_n(t))^k|X_n(t)=x]}{\Delta t}.
\end{equation}
An analogous definition holds for $\Delta Y(t)$ and $
M_{k;Y}(x)$, for the process $Y$.  The conditions for the weak convergence to a diffusion process are those proposed in \cite{Ricciardi,GikhmanSkorokhod,KarlinTaylor},  that we now repeat for {convenience}.

\begin{theorem}[Theorem from \cite{GikhmanSkorokhod}] A diffusion process $Y$ starting in $Y(0)=x_0$ 
 is the diffusion approximation of a sequence of jump processes $(X_n)_{n\in\mathbb{N}}$ starting in $x_{0n}=x_0$ if the following conditions are met
\begin{eqnarray}
\label{c1}&&\lim_{n\to\infty} M_{1;X_n}(x)\to M_{1;Y}(x)<\infty.\\
\label{c2}&&\lim_{n\to\infty} M_{2;X_n}(x)\to M_{2;Y}(x)>0.\\
\label{c3}&&\lim_{n\to\infty} M_{4;X_n}(x)\to M_{4;Y}(x) =  0,
\end{eqnarray}
for all $x$ in the state space of $X_n$.
\end{theorem}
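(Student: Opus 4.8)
The plan is to prove this by the classical generator–convergence / martingale-problem route, which is the modern way of packaging the Gikhman–Skorokhod argument. First I would realise the limiting diffusion $Y$ as the (unique in law) solution of the martingale problem for the second-order operator $\mathcal{L}g(x)=M_{1;Y}(x)\,g'(x)+\tfrac12 M_{2;Y}(x)\,g''(x)$, equivalently the solution of $dY=M_{1;Y}(Y)\,dt+\sqrt{M_{2;Y}(Y)}\,dW$; existence and uniqueness here rest on the regularity of $M_{1;Y},M_{2;Y}$ together with the nondegeneracy $M_{2;Y}>0$ supplied by \eqref{c2}. The strategy is then: (i) show the laws of $(X_n)_{n\in\mathbb{N}}$ on Skorokhod space $D([0,\infty))$ form a tight family; (ii) show that every subsequential weak limit has continuous paths and solves the $\mathcal{L}$-martingale problem; (iii) conclude by well-posedness that the whole sequence converges to $Y$. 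The hypothesis $x_{0n}=x_0$ takes care of the starting point.

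Step (ii) is the computational heart and where \eqref{c1}--\eqref{c3} enter. For a test function $g\in C_c^\infty$, consider the discrete generator $\mathcal{L}_n g(x)=\lim_{\Delta t\to 0^+}\mathbb{E}[g(X_n(t+\Delta t))-g(x)\mid X_n(t)=x]/\Delta t$ and Taylor-expand $g$ around $x$ to fourth order, $g(x+\Delta X_n)-g(x)=g'(x)\Delta X_n+\tfrac12 g''(x)(\Delta X_n)^2+\tfrac16 g'''(x)(\Delta X_n)^3+\rho$ with $|\rho|\le\tfrac{1}{24}\|g^{(4)}\|_\infty(\Delta X_n)^4$. Dividing by $\Delta t$ and letting $\Delta t\to 0$, definition \eqref{defM} turns the first two terms into $M_{1;X_n}(x)g'(x)+\tfrac12 M_{2;X_n}(x)g''(x)$, which by \eqref{c1}--\eqref{c2} converge to the two terms of $\mathcal{L}g(x)$; the fourth-order remainder is bounded by $\tfrac{1}{24}\|g^{(4)}\|_\infty M_{4;X_n}(x)\to 0$ by \eqref{c3}; and the third-order term is negligible because Cauchy–Schwarz (applied to the limiting jump rate measure, the drift contribution being $O(\Delta t)$) gives $|M_{3;X_n}(x)|\le (M_{2;X_n}(x))^{1/2}(M_{4;X_n}(x))^{1/2}\to 0$, since $M_{2;X_n}$ stays bounded and $M_{4;X_n}\to 0$. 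Hence $\mathcal{L}_n g\to\mathcal{L}g$ pointwise on the state space of $X_n$ (locally uniformly if the infinitesimal moments converge locally uniformly).

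For tightness I would use that $M_{2;X_n}$ is bounded and $M_{4;X_n}\to 0$: an Aldous–Rebolledo-type criterion together with these moment bounds yields tightness of $(X_n)$, while the vanishing of the fourth infinitesimal moment forces every limit point to have no jumps — a jump component of fixed size $\varepsilon$ occurring at positive rate contributes a term of order $\varepsilon^4$ to $M_4$, contradicting \eqref{c3}; this is precisely the content of Pawula's theorem invoked earlier in the paper. Thus all subsequential limits live in $C([0,\infty))$. Given such a limit $\tilde Y$, passing to the limit in the identity ``$g(X_n(t))-g(X_n(0))-\int_0^t\mathcal{L}_n g(X_n(s))\,ds$ is a martingale'' — using $\mathcal{L}_n g\to\mathcal{L}g$ and the uniform integrability furnished by the moment bounds — shows that $\tilde Y$ solves the $\mathcal{L}$-martingale problem; uniqueness gives $\tilde Y\stackrel{d}{=}Y$, and since every subsequential limit equals $Y$, we get $X_n\Rightarrow Y$.

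The main obstacle is not the Taylor expansion but steps (i) and the continuity part of (ii): rigorously upgrading ``$M_{4;X_n}\to 0$'' into tightness plus continuity of the limit (a Lindeberg-type negligibility of the jumps), and making sure that the operator $\mathcal{L}$ produced in the limit is attached to a genuinely well-posed martingale problem — the statement as quoted from \cite{GikhmanSkorokhod} presupposes that ``a diffusion process $Y$'' with the prescribed coefficients exists and is unique, which tacitly requires regularity (e.g. continuity or local Lipschitz behaviour) of $M_{1;Y}$ and $M_{2;Y}$ beyond what \eqref{c1}--\eqref{c3} state. In the present application these coefficients turn out to be affine in $x$ with constant diffusion coefficient, so well-posedness is immediate and these delicate points do not interfere.
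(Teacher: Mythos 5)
This is a statement the paper does not prove at all: it is imported verbatim as a known result, with \cite{GikhmanSkorokhod} (via \cite{Ricciardi,KarlinTaylor}) carrying the proof, so there is no internal argument to compare yours against. Taken on its own, your martingale-problem/generator-convergence sketch is the standard modern proof of this classical theorem and is essentially sound: the fourth-order Taylor expansion with the remainder controlled by \eqref{c3}, the third-order term killed by the conditional Cauchy--Schwarz bound $|M_{3;X_n}(x)|\le (M_{2;X_n}(x))^{1/2}(M_{4;X_n}(x))^{1/2}$, and the observation that $\mathbb{E}[(\Delta X_n)^4\mid X_n(t)=x]\ge \varepsilon^4\,\mathbb{P}(|\Delta X_n|>\varepsilon\mid X_n(t)=x)$, so that \eqref{c3} forces the Dynkin--Kinney continuity condition and hence continuous limit paths, are all correct steps. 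The original Gikhman--Skorokhod route is somewhat different in packaging: for one-dimensional Markov processes they work directly with convergence of transition probabilities/finite-dimensional distributions under locally uniform convergence of the first two conditional moments plus a Lindeberg-type negligibility of large increments (which the fourth-moment condition implies, exactly as you argue), rather than through the martingale problem; your route buys generality and a cleaner identification of the limit, theirs buys explicit one-dimensional estimates without invoking well-posedness machinery. Two caveats, both of which you already flag and which are really defects of the bare statement rather than of your argument: the conditions \eqref{c1}--\eqref{c3} as written are pointwise in $x$, whereas both tightness and the passage $\int_0^t\mathcal{L}_ng(X_n(s))\,ds\to\int_0^t\mathcal{L}g(\tilde Y(s))\,ds$ need locally uniform convergence and some growth control, and the theorem tacitly presumes the limiting martingale problem is well posed; in the paper's application (affine drift, constant diffusion coefficient) both are immediate. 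The tightness step is the thinnest part of your sketch --- asserted via an Aldous--Rebolledo criterion rather than carried out --- but as a proof outline of a cited classical result this is acceptable.
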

It is well known that diffusion processes have all infinitesimal moments of order higher than two null \cite{KarlinTaylor}. At the same time, Pawula theorem \cite{Pawula} states that if the infinitesimal moments $M_{k}$ of a stochastic process exist for all $k\in\mathbb{N}$, the vanishing of any even order infinitesimal moment larger than two implies $M_{k} = 0$ for $k\geq 3$. This has two key consequences. First, if a process  has a finite number of nonzero infinitesimal moments, this number is {at most} two. Second, if the fourth infinitesimal moment goes to zero, then all other $M_k, k\geq 3$ goes to zero, motivating condition \eqref{c3}.

\subsection{Conditions for the diffusion approximation of the shot noise}
Throughout, we assume that $x_{0n}=x_0=y_0$. Replacing $X$ with $X_n$ in \eqref{sde}, {and discretising it {with respect to time},} we get
\begin{equation}\label{sde2}
\Delta X_n(t)=-\alpha X_n(t)\Delta t + J_n \Delta N_n(t),
\end{equation}
where $\Delta N_n(t)$ is the increment of the Poisson process in $(t,t+\Delta)$. 
The first, second and fourth infinitesimal moments of $X_n(t)$ are given by
\begin{eqnarray}
\label{m1g}M_{1;X_n}(x)&=&
-\alpha x+\lambda_n\mathbb{E}[J_n],\\
\label{m2g}M_{2;X_n}(x)&=&
\lambda_n\mathbb{E}[J_n^2],\\
\label{m4g}M_{4;X_n}(x)&=&\lambda_n\mathbb{E}[J_n^4],
\end{eqnarray}
where we used the fact that {$\mathbb{E}[(\Delta N_n(t))^k]=\lambda_n\Delta t+o((\lambda_n\Delta t)^2)$ because $\mathbb{P}(\Delta N_n(t)=1)=\lambda_n \Delta t $ and $\mathbb{P}(\Delta N_n(t)>1)=o((\lambda_n\Delta t)^2)$  
 for $k=1,2,4$, where $o(\Delta t)/\Delta t\to 0$ as $\Delta t\to 0^+$}.   The diffusion regime requires \begin{eqnarray}
\label{lim} & \lim\limits_{n\to\infty}\mathbb{E}[J_n] = 0, \qquad &\lim\limits_{n\to\infty}\lambda_n = \infty.
\end{eqnarray} 
For the shot noise process, using the infinitesimal moments of $X_n$ given by \eqref{m1g}--\eqref{m4g}, we see that the conditions \eqref{c1}-\eqref{c3} guaranteeing the diffusion approximation  are given by
\begin{eqnarray}
\label{EJn}&& \lim_{n\to\infty}\lambda_n \mathbb{E}[J_n]= \mu\in[0,\infty),\\
\label{EJ2n}&&\lim_{n\to \infty}  \lambda_n \mathbb{E}[J^2_n]= \sigma^2, \quad \sigma>0, \\
\label{EJ4n}&&\lim_{n\to \infty} \lambda_n \mathbb{E}[J^4_n]= 0,
\end{eqnarray}
which are equivalent to 
 \begin{equation}\label{condJn}
\mathbb{E}[J_n]=O(\lambda^{-1}_n), \qquad  \mathbb{E}[J^2_n]={\Theta}(\lambda^{-1}_n), \qquad \mathbb{E}[J^4_n]={o}(\lambda^{-1}_n),
\end{equation}
where {for $a_n,b_n>0, a_n=O(b_n), a_n=\Theta(b_n)$ or $a_n=o(b_n)$ if $a_n/b_n$ converges to $l\in [0,\infty), l\in(0,\infty)$ or $0$, respectively, as $n\to\infty$.} That is, the mean and the second moment of the jump amplitudes $J_n$ should go to zero {at the same rate as} $\lambda_n$ goes to infinity, while the fourth moment should go to zero faster than $1/\lambda_n$. 
\begin{remark}
As required in \eqref{EJ2n}, the limit of the second infinitesimal moment should not be zero, otherwise the limit process will be deterministic. 
\end{remark} 
\begin{remark}
Conditions \eqref{EJn}, \eqref{EJ2n}, \eqref{EJ4n} under \eqref{lim} are the same as (3.9), (3.11), (3.18) under (2.10) for the diffusion approximation of a jump process with synaptic reversal potential in \cite{LanskyLanska1987}.
\end{remark}
\begin{remark}\label{Remark4}{
When $X$ is a member of a $k$-parameter exponential family and $T_j(X)=X$ for some of  the $j=1,\ldots, k$ statistics, conditions \eqref{EJn}, \eqref{EJ2n},  \eqref{EJ4n}become
\begin{eqnarray*}
&&-\lim_{n\to\infty}\lambda_n\frac{\partial_j\log c(\theta)}{\partial_j\xi(\theta)}=\mu,\\
&& 
\lim_{n\to\infty}\frac{\lambda_n}{\partial^2_{j}\xi_j(\theta)}[-\partial_j^2\log c(\theta)+(\partial_j\log c(\theta))^2]=\sigma^2,\\
&&\lim_{n\to\infty}\frac{\lambda_n\left\{-\partial_j^4\log c(\theta)+\partial_j^3 \log c(\theta)\partial_j \log c(\theta)[1-3\partial_j^2 \log c(\theta)]+3
\partial_j^2 \log c(\theta)[\partial_j^2 \log c(\theta)-2(\partial_j \log c(\theta))^2]\right\}}
{\partial^4_{j}\xi_j(\theta)}=0,
\end{eqnarray*}
where $\partial_j$ denotes the partial derivative with respect to $j$, and  moments have been derived from the moment generating function \cite{CasellaBerger}.}
\end{remark}
\begin{remark}
If the conditions are fulfilled, the diffusion limit of the sequence of shot noise processes is  {a Gaussian} OU  process starting in $x_0=x_{0n}=y_0$ with mean, covariance and variance given by 
\begin{eqnarray}
\label{meanOU}\mathbb{E}[Y(t)]&=&\frac{\mu}{\alpha}(1-e^{-\alpha t})+ y_0e^{-\alpha t},\\
\label{covOU}\textrm{Cov}(Y(t),Y(t+s))&=&\frac{\sigma^2}{2\alpha}e^{-\alpha s}(1-e^{-2\alpha t}), \\
\label{varOU} \textrm{Var}(Y(t))&=&\frac{\sigma^2}{2\alpha}(1-e^{-2\alpha t}).
\end{eqnarray}
{This diffusion process, if existing, would coincide with that obtained by the usual and the Gaussian approximations.}
\end{remark}

\begin{conjecture}
A sequence of nonnegative random variables $(J_n)_{n\in\mathbb{N}}$  satisfying the conditions  \eqref{EJn}-\eqref{EJ4n} under \eqref{lim} does not exist. 
\end{conjecture}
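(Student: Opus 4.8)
The plan is to show that the three rescaled-moment conditions \eqref{EJn}--\eqref{EJ4n} are mutually incompatible for nonnegative $J_n$, so in fact the conjecture can be upgraded to a theorem. The guiding idea is that the exponent $2$ lies strictly between $1$ and $4$, so $\mathbb{E}[J_n^2]$ is squeezed between $\mathbb{E}[J_n]$ and $\mathbb{E}[J_n^4]$; once all three are rescaled by $\lambda_n$, conditions \eqref{EJn} and \eqref{EJ4n} pin the rescaled second moment to $0$, contradicting \eqref{EJ2n}.

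The first step is to establish the interpolation bound: for every nonnegative random variable $J_n$,
\[
\big(\mathbb{E}[J_n^2]\big)^{3}\;\leq\;\big(\mathbb{E}[J_n]\big)^{2}\,\mathbb{E}[J_n^4].
\]
This follows from H\"older's inequality applied to $J_n^2=J_n^{2/3}\cdot J_n^{4/3}$ with conjugate exponents $p=3/2$ and $q=3$, which is legitimate precisely because $J_n\geq 0$. A self-contained alternative, avoiding H\"older, is to write $\mathbb{E}[J_n^2]=\mathbb{E}[J_n^2\mathbf{1}_{\{J_n\le t\}}]+\mathbb{E}[J_n^2\mathbf{1}_{\{J_n>t\}}]\leq t\,\mathbb{E}[J_n]+t^{-2}\,\mathbb{E}[J_n^4]$ for every $t>0$, using $J_n^2\leq tJ_n$ on $\{J_n\le t\}$ and $J_n^2\leq t^{-2}J_n^4$ on $\{J_n>t\}$, and then to optimise over $t$ (the choice $t=(\mathbb{E}[J_n^4]/\mathbb{E}[J_n])^{1/3}$ reproduces the same inequality up to an explicit constant, which is all that is needed below).

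The second step is to multiply this bound by $\lambda_n^3$ and regroup so that the quantities from \eqref{EJn}--\eqref{EJ4n} appear:
\[
\big(\lambda_n\,\mathbb{E}[J_n^2]\big)^{3}\;\leq\;\big(\lambda_n\,\mathbb{E}[J_n]\big)^{2}\,\big(\lambda_n\,\mathbb{E}[J_n^4]\big).
\]
Letting $n\to\infty$ and invoking \eqref{EJn}, \eqref{EJ2n} and \eqref{EJ4n}, the left-hand side converges to $\sigma^{6}$ while the right-hand side converges to $\mu^{2}\cdot 0=0$; here the finiteness of $\mu$ guaranteed by \eqref{EJn} is exactly what makes the product unambiguously zero, and conditions \eqref{lim} are not even required. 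Hence $\sigma^{6}\leq 0$, which is impossible since $\sigma>0$, so no sequence $(J_n)_{n\in\mathbb{N}}$ with the stated properties can exist.

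There is essentially no obstacle here: the only thing to spot is the admissible pair of H\"older exponents $(3/2,3)$ that interpolates $2$ between $1$ and $4$. It is worth remarking why the argument necessarily breaks down once jumps of either sign are allowed, in agreement with the Gaussian diffusion limit obtained later: the interpolation bounds $\mathbb{E}[J_n^2]$ in terms of $\mathbb{E}[|J_n|]$ rather than the signed mean $\mathbb{E}[J_n]$, so if cancellations keep $\lambda_n\mathbb{E}[J_n]$ bounded while $\lambda_n\mathbb{E}[|J_n|]\to\infty$, the right-hand side above need not vanish and the contradiction disappears.
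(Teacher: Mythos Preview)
Your proof is correct, and in fact it resolves what the paper leaves open: the statement is presented there as a \emph{Conjecture}, with no proof offered. The paper supports it only indirectly, via Theorem~\ref{Theonew}, which shows by a Portmanteau-type argument that the weak limit of the compound Poisson drivers $L_n$ cannot be Brownian motion; but that result operates at the level of the limiting L\'evy process under assumptions \eqref{lambda}--\eqref{weak}, not at the level of the moment conditions \eqref{EJn}--\eqref{EJ4n}, and so does not by itself settle the conjecture as stated.

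Your argument is a clean moment-interpolation (log-convexity of moments / H\"older with exponents $3/2$ and $3$), yielding $(\lambda_n\mathbb{E}[J_n^2])^3\le(\lambda_n\mathbb{E}[J_n])^2(\lambda_n\mathbb{E}[J_n^4])$ for nonnegative $J_n$, from which $\sigma^6\le\mu^2\cdot 0=0$ follows immediately. This is genuinely stronger than what the paper establishes: it upgrades the conjecture to a theorem, and it does so without invoking weak-convergence machinery or even assumption \eqref{lim}. Your closing remark explaining why the bound degrades to one involving $\mathbb{E}[|J_n|]$ once signed jumps are allowed is also apt and dovetails with the paper's Section~\ref{Section4.2}.
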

\noindent Hence, as also shown in Theorem \ref{Theonew}, unless the jump amplitudes do not depend on $n$ \cite{DassiosJang2005}, the OU  process, {obtained via the usual and Gaussian approximations,} 
 cannot be obtained as diffusion approximation of the shot noise process. In particular, when \eqref{lim}, \eqref{EJn}, \eqref{EJ2n} are satisfied, \eqref{EJ4n} is not fulfilled, i.e. the fourth infinitesimal moment does not vanish, unless $J_n$ assumes negative values, as shown in the following.

\subsection{Examples}
We now consider different families of jump distributions, discuss the consequences of violating the vanishing of the fourth infinitesimal moment and investigate the errors when replacing the shot noise with the Gaussian OU process.  

\subsubsection{{Degenerate (constant)} and exponential distributions}
Conditions \eqref{EJn} and \eqref{EJ2n} cannot be violated, otherwise the limit process  would either have a mean going to infinity or be a deterministic process. {This is what happens when the jump size is constant (deterministic), i.e. $J_n=j_n>0$, and thus all moments are equal. Indeed, if $\lambda_n j^2_n\to \sigma^2$, then $\lambda_nj_n\to \infty$, i.e. the mean of the limit process goes to infinity. On the contrary, if $\lambda_n j_n\to \mu$, then $\lambda_nj_n^2\to 0$, i.e. the limit process is deterministic. This is also what } happens if $J_n$ is exponentially distributed with mean $\theta_n$. Indeed, if $\lambda_n\mathbb{E}[J_n]=\lambda_n\theta_n \to \mu$, then $\lambda_n \mathbb{E}[J^2_n]=2\lambda_n\theta^2_n\to 0$ as $n\to\infty$.  Hence, a shot noise process with {constant} or exponential distributed jumps yields a deterministic and not a diffusion process, {making thus a Gaussian approximation not suitable/accurate}. 
This may explain why the considered diffusion approximation {for the exponential case} misrepresents the subthreshold voltage distribution for certain types of synaptic drive, see, e.g. \cite{RichardsonGerstnerChaos}, or the lack of fit of the derived firing statistics \cite{DL,RichardsonSwarbrick2010} compared to the alternative approaches developed there. Similar results hold when considering jumps to be lognormal distributed (results not shown).

\subsubsection{Results when the fourth infinitesimal moment does not vanish}\label{section423}

\begin{table}[t]
\begin{tabular}{|c|c|c|c|c|}
\hline
Distribution & Parameters &
 $\lambda_n\mathbb{E}[J^2_n]{\to \sigma^2}$&$
 \lambda_n\mathbb{E}[J^4_n]{\to} M_{4;Y}(x)$& ${\lambda_n{f_{J_n}(x)}\stackrel{\mathcal{L}}{\to}{u(x)}}$\\ \hline
Bernoulli &$ p=\frac{\mu}{\lambda_n} $& $ \mu  $& $ \mu $ &$\mu{\delta(x-1)}$ (PP)\\ \hline
Poisson, {Poi$(\widetilde\lambda)$}&${\widetilde\lambda}= \frac{\mu}{\lambda_n}$& $ \mu $& $ \mu $&$\mu{\delta(x-1)}$\ (PP)\\ \hline 
$\Gamma $& ${\widetilde\alpha}= \frac{\mu}{\lambda_n}{\beta}$&&&\\
{shape $\widetilde\alpha$, rate $\beta$}
&$ {\beta}={\frac{\mu}{\widetilde\sigma^2}}$& ${\widetilde\sigma^2}$&$6\frac{{\widetilde\sigma^6}}{\mu^2}$ &$\frac{\mu^2}{{\widetilde\sigma^2}}x^{-1}e^{-\mu x/{\widetilde\sigma^2}}$ (GP) \\ \hline
inverse Gaussian & ${\widetilde\mu =} \frac{\mu}{\lambda_n}$&&&$
\frac{s}{\sqrt{2\pi x^3}}e^{-\mu x/(2{\widetilde\sigma^2})}$ (IGP)\\
IG({mean} {$\widetilde\mu$}, {shape}  {$\widetilde\lambda$})& {$\widetilde\lambda$} = $\frac{\mu^3}{\lambda_n^2{\widetilde\sigma^2}}$ & ${\widetilde\sigma^2}$ & $\frac{15{\widetilde\sigma^6}}{\mu^2}$&$s=\sqrt{\frac{\mu^3}{{\widetilde\sigma^2}}}$\\ \hline
beta &  $ {\widetilde\alpha} =\frac{\mu\beta}{\lambda_n}$&&&\\
 (shape ${\widetilde\alpha}$, shape $\beta$) &  $\beta$ & $\frac{\mu}{\beta+1}$& $\frac{6\mu}{(\beta+1)(\beta+2)(\beta+3)}$&$\mu\beta x^{-1}(1-x)^{\beta-1}$ (BP)\\ \hline
\end{tabular}
\caption{List of the nonnegative jump amplitude distributions yielding non-null first two infinitesimal moments, i.e. satisfying conditions \eqref{EJn},  \eqref{EJ2n} under \eqref{lim}. For all jump distributions, $\lim\limits_{n\to \infty}\lambda_n\mathbb{E}[J_n]=\mu$. 
 Here {$u(x)$} denotes the L{\'e}vy {density} of the limiting L\'evy-driven OU process, cf. Section \ref{Section4}. Other distributions fulfilling \eqref{EJn} and \eqref{EJ2n} are the $\chi$, the generalised gamma and the beta prime distributions (results not shown). {The $\chi^2$ distribution can be obtained as a special case of the gamma with rate $\beta=1/2$, i.e. $\widetilde\sigma^2=2\mu$. 
 All pmfs and pdfs are reported in \ref{prooflevy}, together with the calculations to obtain the limit L\'evy densities $u(x)$. Here PP, GP, IGP and BP denote the Poisson process, the gamma process,  the inverse Gaussian process and the beta process, respectively, defined in Section 3. 
}}
\label{Table1} 
\end{table}

In Table \ref{Table1}, we report a list of discrete and continuous nonnegative random variables $J_n$ satisfying  conditions \eqref{EJn} and \eqref{EJ2n} under \eqref{lim}. Other random variables fulfilling these requirements are the $\chi$, the generalised gamma and the beta prime distribution (results not shown). All of them are member of the exponential family, with the Bernoulli, Poisson, gamma and inverse Gaussian distributions having {sufficient statistic} $T(X)=X$ (the first {two}) or $T_1(X)=X$ (the {others}), meaning that the conditions of Remark \ref{Remark4} could be alternatively verified. 
We provide both the distribution parameters and the resulting first, second and fourth infinitesimal moments. Since none of the fourth infinitesimal moment vanishes, condition \eqref{EJ4n} is not fulfilled, meaning that the limit process is not a diffusion. However, the fourth infinitesimal moment can be made arbitrarily small by letting  
$\mu\to\infty$ if the jumps are gamma, inverse Gaussian or beta distributed, {and $\widetilde\sigma^2$ does not depend on $\mu$}. 
 Intuitively, if $\mu$ increases, the probability that the OU process at time $t$ assumes negative values decreases, reducing thus the discrepancy between the state space of the original and the limit process, improving thus the quality of the approximation. For the other considered jump distributions, the fourth infinitesimal moment cannot vanish, otherwise all infinitesimal moments would vanish, yielding a degenerate limit in a point. Finally, if $J_n$ is beta distributed, one of its underlying parameter can be arbitrarily chosen in a way such that,  for fixed $\mu$ and $\sigma^2$, it yields the smallest fourth infinitesimal moment among the  considered distributions. 

\subsubsection{Error caused by replacing the shot noise with the {L\'evy OU} and {Gaussian OU} processes}

{To compare the quality of the approximation of the shot noise process with the non-Gaussian OU or the OU processes, and to investigate the role played by the jump amplitude distributions on that, {we perform $10^6$ Monte Carlo simulations of the processes of interest, via the {\em shotnoise} R-package released  on github upon publication.}} 
\begin{figure}
\begin{center}
\includegraphics[width=.8\textwidth]{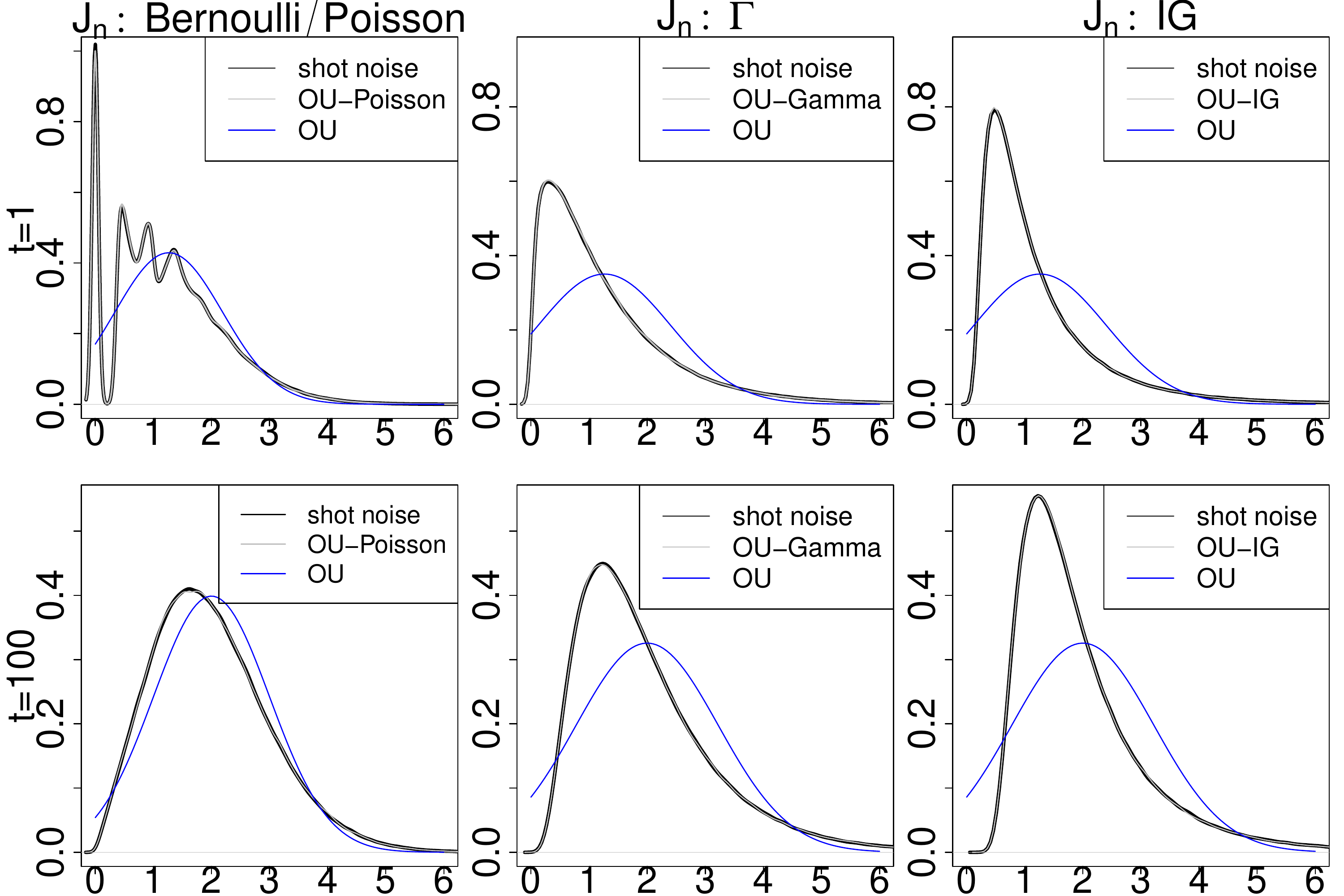}
\end{center}
\caption{Density of {the shot noise process $X_n(t)$ (black lines), the non-Gaussian OU process \ref{Levy} (grey lines) and the OU process (blue lines) obtained through $10^6$ Monte Carlo simulations} 
if $J_n$ is Bernoulli {or Poisson (left panels)}, gamma (central panels) or inverse Gaussian distributed (right panels), with parameters given in Table \ref{Table1} for t={1 (upper figures) and $t=100$ (bottom figures)}, $\mu={1}, {\widetilde\sigma^2}=3, \lambda= 1000, \alpha=1, x_0=0$.}
\label{FigureMC}
\end{figure}

{In Figure \ref{FigureMC}, we report 
the pdfs of the shot noise process $X_n(t)$, the limit L\'evy driven OU processes and the Gaussian OU process obtained via simulations at time $t=1$ (top panels) and $t=100$ (bottom panels) for different underlying jump distributions. The L\'evy-driven OU  processes, now denoted $Y_L(t)$, are solutions of \eqref{Levy}, with mean \eqref{meanOUL} and variance \eqref{varOUL},
with $\mathbb{E}[L_\infty(1)]=\mu$ and $\mathbb{E}[L_\infty^2(1)]=\sigma^2$, with $\sigma^2$ and L\'evy densities given in Table \ref{Table1}. The Gaussian OU process, now denoted $Y_G(t)$, is normally distributed with mean and variance given by \eqref{meanOU} and \eqref{varOU}, respectively. The considered times allow to compare the performance of the approximations at the beginning of the evolution and in the asymptotic/stationary regime.
 In all cases, the non-Gaussian OU processes successfully approximate the shot noise process,
 with overlapping densities, while the OU process yields a poor approximation, especially for $t=1$. }
 
{To measure the error when approximating the shot noise process $X_n(t)$ with {the L\'evy-driven OU $Y_L(t)$ or the Gaussian OU $Y_G(t)$, and to study its dependence on the time $t$ and on the underlying parameters}, we consider the integrated absolute error (IAE) defined as
\[
\textrm{IAE}(f_{X_n(t)},f_{Y{_a}(t)}):=\int_0^\infty|f_{X_n(t)}(x)-f_{Y{_a}(t)}(x)|dx,
\]
{where $a$ is either $L$ or $G$.} 
In Figure \ref{Fig2}, we report this error {for the Gaussian OU (top panels) and the L\'evy-driven OU (bottom panels)} as a function of $t$ (left and central figures) and $\mu$ (right figures)  for different jump distributions  and $x_{0n}=x_0=y_0={0}$. {The derived L\'evy-driven OU processes $Y_L$ yield the best approximation of the shot noise, outperforming the OU $Y_G$ in all considered scenarios, with IAEs at least 15 times smaller, except for the OU-Poisson when $\mu=3$. In that case, the IAE has a non-monotonic behavior in $t$, probably due to the underlying discrete Poissonian jump nature. Except for this, all considered jump distributions yield similar IAEs for $Y_ L(t)$, which are below $1\%$ and approximately constant in $t$ and $\mu$ for large values of $\lambda$, unless $\mu$ is very small. }   
The quality of the approximation {of the Gaussian OU} improves if either $\mu$ or $t$ increase, while it decreases if ${\widetilde\sigma^2}$ increases and $J_n$ depends on it (figure not shown). This can be explained as follows. 
While the shot noise is always nonnegative, the probability of the OU of being nonnegative, knowing that its state space is $\mathbb{R}$,  is increasing in $t$ and $\mu$ and decreasing in $\sigma^2$, being given by 
\[
\mathbb{P}(Y{_G}(t)\geq 0)=1-\Phi\left(-\frac{\mu(1-e^{-\alpha t})}{\sqrt{\sigma^2\alpha(1-e^{-2\alpha t})}}\right),
\]
where $\Phi(x)$ denotes the cdf of a standard normal distribution. {For the Gaussian OU,}  among the considered jump distributions  of $J_n$, the Bernoulli and the Poisson, {having the same fourth infinitesimal moments (cf. Table \ref{1}}),  yield similar IAEs. These errors are smaller than those from the other distributions unless $t$ is small and $\mu$ is large (cf. Figure \ref{Fig2}, {top panels}), {in which case the gamma} distribution yields the lowest IAE. {Hence, choosing a jump distribution yielding a fourth infinitesimal moment lower than another, does not necessarily guarantee a smaller IAE {of the Gaussian OU}, as it can be observed by comparing, for example, the results from the inverse Gaussian and the Bernoulli distributions (cf. Table \ref{Table1}). 

\begin{figure}[h!]
\includegraphics[width=1.03\textwidth]{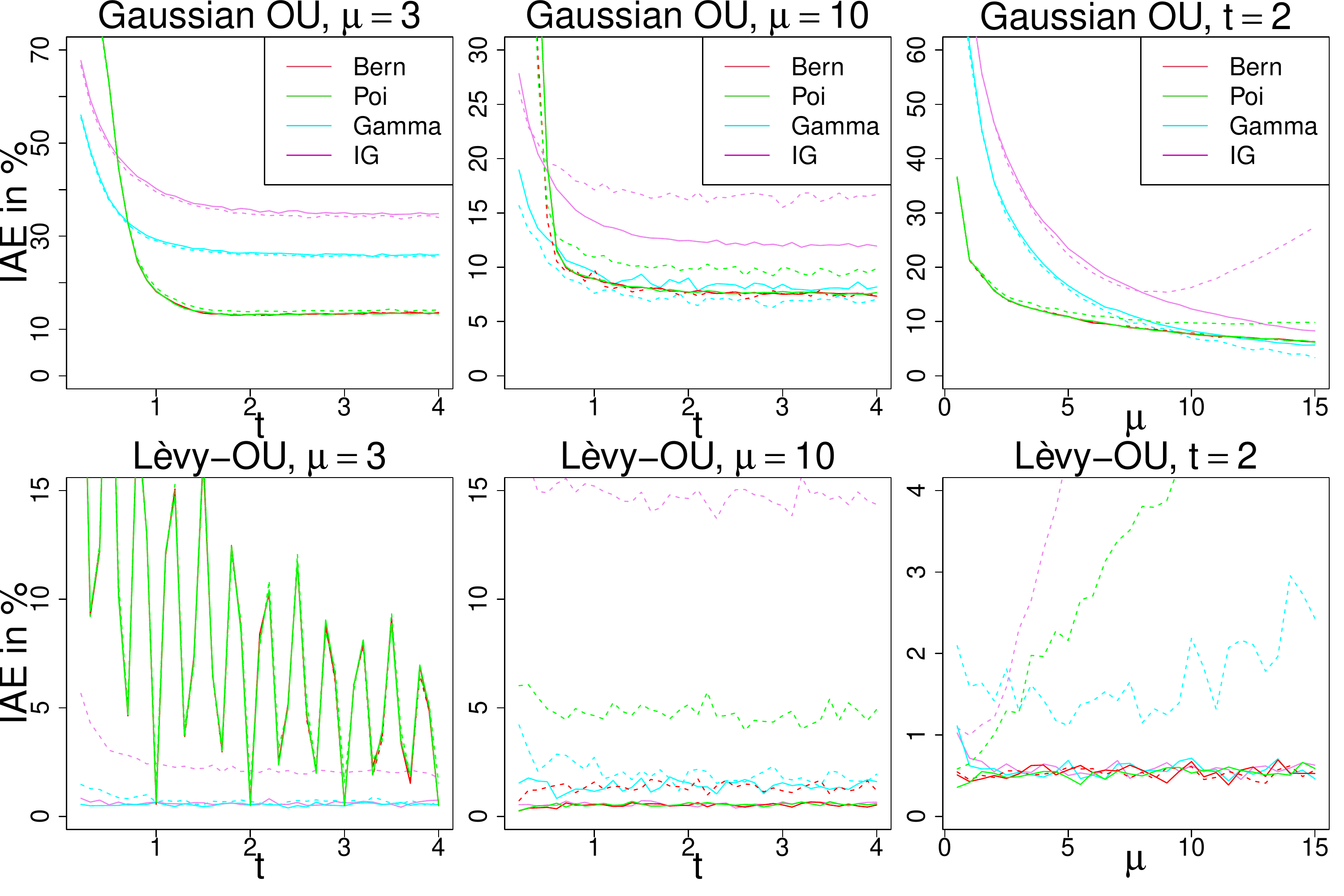}
\caption{{IAE of the shot noise vs the Gaussian OU process (top panels), } IAE($f_{X_n(t)},f_{Y{_G}(t)}$) {and IAE of the shot noise vs the L\'evy-driven OU processes (bottom panels), IAE($f_{X_n(t)},f_{Y_L(t)}$),} 
as a function of $t$ (left {and middle  figures}) and $\mu$ (right figures) when $\alpha=1, {\widetilde\sigma^2}= 3, {x_0=y_0=0}$ and the jump amplitude $J_n$ is Bernoulli (red lines), Poisson (green lines), gamma (light blue lines) or IG distributed with parameters as in Table \ref{Table1} and $\lambda=100$ (dashed lines) or ${10000}$ (solid lines). 
Left {figures: $\mu=3$. Central figures: $\mu=10$.} Right figures: $t=2$.}
\label{Fig2}
\end{figure}

\subsection{Results for negative jump amplitudes}\label{Section4.2}
Throughout this section, we relax the assumption of having nonnegative jumps, allowing $J_n$ to assume negative values but with probability going to 0 as $n\to \infty$. Let $J_n$ be defined by
 \begin{equation}\label{Jnew}
 J_n=\begin{cases}
J_n^+ & \textrm{ with } \mathbb{P}(J_n=J_n^+)= 1-f_n;\\
J_n^- & \textrm{ with } \mathbb{P}(J_n=J_n^-)= f_n,
\end{cases}
\end{equation}
where $J_n^+, J_n^-$ {are nonnegative} and nonpositive random variables, respectively, and $f_n$ is a {real number} in $[0,1)$ such that $(f_n)_{n\in\mathbb{N}}$, 
$f_n\to 0$ as $n\to \infty$.  
Nonnegative jump distributions can be immediately recovered by setting $f_n=0$. If $f_n\in(0,1)$, negative jumps happen with probability going to 0 as $n\to\infty$. {Nevertheless,} this is enough to guarantee that the $J_n$ defined by \eqref{Jnew} fulfils conditions \eqref{EJn}-\eqref{EJ4n}, yielding thus the OU process with mean \eqref{meanOU} and variance \eqref{varOU} as the limit process of $X_n(t)$. We prove this result for {generic $J_n^+$ and $f_n$, assuming $J^-_n$ to follow a univariate degenerate distribution, but} the theorem and the limit OU process hold for other suitable choices of $J_n$ and $f_n$.

\begin{theorem}\label{Theo3}
Under assumption \eqref{lim}, consider $f_n=\lambda_n^{-1+c_1}, c_1\in(0,2/3)$ such that $f_n\to0$ as $n\to\infty$ and 
\begin{equation}\label{newc}
\lim_{n\to\infty}\lambda_n f_n=\infty.
\end{equation}
Let $J_n^-$ be a univariate degenerate distribution assuming only the value $\theta_n=(-\lambda_n^{-c_1+c_2}+\lambda_n^{-c_1}\mu)<0$ {with $c_2\in (0,c_1/2)$},  with pdf, cdf, $k$-moment, $k\in\mathbb{N}$ and variance given by
\[
f_{J_n^-}(x)=\delta(x-\theta_n), \qquad F_{J_n^-}(x)=\mathbbm{1}_{\{x\geq \theta_n\}}, \qquad \mathbb{E}[(J_n^-)^k]=\theta_n^k \qquad \textrm{Var}(J_n^-)=0,
\]
where $\mathbbm{1}_A$  denotes the indicator function of the set $A$. Let $J_n^+$ be a nonnegative distribution satisfying
\begin{equation}\label{limnew}
\mathbb{E}[J^+_n]=\lambda_n^{-1+c_2}, \quad\mathbb{E}[(J^+_n)^2]=\sigma^2\lambda_n^{-1}, \qquad \mathbb{E}[(J^+_n)^4]=\lambda_n^{-1-c_3}, 
\end{equation}
with $c_3>0$. Then, conditions \eqref{EJn}, \eqref{EJ2n} and \eqref{EJ4n} are fulfilled.
\end{theorem}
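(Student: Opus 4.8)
The plan is purely computational: I would compute the first, second and fourth moments of $J_n$ via the law of total expectation applied to the mixture \eqref{Jnew},
\[
\mathbb{E}[J_n^k]=(1-f_n)\,\mathbb{E}[(J_n^+)^k]+f_n\,\theta_n^k,\qquad k=1,2,4,
\]
then insert the prescribed orders \eqref{limnew}, $f_n=\lambda_n^{-1+c_1}$ and $\theta_n=-\lambda_n^{-c_1+c_2}+\mu\lambda_n^{-c_1}$, multiply through by $\lambda_n$, and check that every term other than the intended limit is $O(\lambda_n^{-\epsilon})$ for some $\epsilon>0$. All of this reduces to the three exponent inequalities $c_1+c_2<1$, $2c_2<c_1$ and $4c_2<3c_1$, each immediate from the ranges: $c_2<c_1/2<1/3$ and $c_1<2/3$ give $c_1+c_2<\tfrac32 c_1<1$, while $2c_2<c_1$ is the defining constraint on $c_2$ and $4c_2<2c_1<3c_1$. (Note also that $\lambda_n f_n=\lambda_n^{c_1}\to\infty$ automatically, so \eqref{newc} is built into the choice of $f_n$.)

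For \eqref{EJn}: since $f_n\theta_n=-\lambda_n^{-1+c_2}+\mu\lambda_n^{-1}$, the term $f_n\theta_n$ is designed precisely so that its leading piece cancels the contribution $(1-f_n)\mathbb{E}[J_n^+]=(1-f_n)\lambda_n^{-1+c_2}$, leaving $\mathbb{E}[J_n]=\mu\lambda_n^{-1}-\lambda_n^{-2+c_1+c_2}$, i.e. $\lambda_n\mathbb{E}[J_n]=\mu-\lambda_n^{-1+c_1+c_2}\to\mu$ because $c_1+c_2<1$. For \eqref{EJ2n}: expanding $\theta_n^2=\lambda_n^{-2c_1+2c_2}-2\mu\lambda_n^{-2c_1+c_2}+\mu^2\lambda_n^{-2c_1}$ and multiplying through gives $\lambda_n\mathbb{E}[J_n^2]=(1-f_n)\sigma^2+\lambda_n^{-c_1+2c_2}-2\mu\lambda_n^{-c_1+c_2}+\mu^2\lambda_n^{-c_1}$; since $-c_1+2c_2<0$ (hence also the other two exponents are negative) and $f_n\to0$, this converges to $\sigma^2>0$. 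For \eqref{EJ4n}: bounding $|\theta_n|\le(1+\mu)\lambda_n^{-c_1+c_2}$ yields $\lambda_n f_n\theta_n^4=O(\lambda_n^{-3c_1+4c_2})\to0$, while $\lambda_n(1-f_n)\mathbb{E}[(J_n^+)^4]=(1-f_n)\lambda_n^{-c_3}\to0$ because $c_3>0$; hence $\lambda_n\mathbb{E}[J_n^4]\to0$.

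The only delicate point, and it is a matter of design rather than of difficulty, is the choice of $\theta_n$ producing the exact cancellation in the first moment so that $\lambda_n\mathbb{E}[J_n]$ tends to $\mu$ and neither diverges nor collapses to $0$; once the three exponent inequalities are recorded there is no obstacle, only the bookkeeping of powers of $\lambda_n$. Finally, with \eqref{EJn}--\eqref{EJ4n} in force together with \eqref{lim}, the Gikhman--Skorokhod theorem recalled above applies with $M_{1;Y}(x)=-\alpha x+\mu$, $M_{2;Y}(x)=\sigma^2$ and $M_{4;Y}(x)=0$, so that $X_n$ converges weakly to the Gaussian OU process with mean \eqref{meanOU}, covariance \eqref{covOU} and variance \eqref{varOU}.
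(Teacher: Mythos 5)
Your proposal is correct and follows essentially the same route as the paper's proof: compute $\mathbb{E}[J_n^k]=(1-f_n)\mathbb{E}[(J_n^+)^k]+f_n\theta_n^k$ for $k=1,2,4$, multiply by $\lambda_n$, and check that all remainder exponents are negative, which reduces to $c_1+c_2<1$, $2c_2<c_1$ and $4c_2<3c_1$ exactly as in the paper. The only cosmetic difference is that you spell out the exponent inequalities and the cancellation in the first moment slightly more explicitly, and append the (already established) Gikhman--Skorokhod conclusion, which the theorem itself does not require.
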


\begin{proof}
The proof is reported in Section \ref{AppendixA}. 
\end{proof}
{Note that the negative amplitude $\theta_n$ goes to zero as $n\to\infty$, since $\lambda_n\to\infty$,  $-c_1<0$ and $-c_1+c_2<0$ being $c_2< c_1/2$.} 
\begin{corollary}\label{cor2}
A nonnegative distribution $J^+_n$ satisfying the assumptions of Theorem \ref{Theo3} exists.
\end{corollary}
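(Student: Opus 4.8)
The plan is to exhibit, for every sufficiently large $n$, an explicit two–atom law for $J_n^+$ that matches the three prescribed moments in \eqref{limnew}. Since the corollary only asserts existence and the exponent $c_3>0$ is at our disposal, I first fix $c_3\in(0,2c_2)$; this interval is nonempty because $c_2<c_1/2<1/3$, and, using $c_2<1/3$ again, the choice automatically forces the three inequalities $c_3<1$, $c_3<2c_2$ and $c_3<2-2c_2$ that will be needed below. Write $\mu_n:=\lambda_n^{-1+c_2}$, $\nu_n:=\sigma^2\lambda_n^{-1}$ and $\rho_n:=\lambda_n^{-1-c_3}$ for the target first, second and fourth moments of $J_n^+$ (the third moment being free).

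I look for $J_n^+$ supported on two points $0<a_n<b_n$ with $\mathbb{P}(J_n^+=b_n)=p_n\in(0,1)$. Setting $s_n=a_n+b_n$ and $r_n=a_nb_n$, the moment recursions for a two–point law give $\nu_n=\mu_n s_n-r_n$ and $\rho_n=(\nu_n-\mu_n^2)s_n^2+\nu_n^2$, which determine the parameters in closed form:
\begin{equation*}
s_n^2=\frac{\rho_n-\nu_n^2}{\nu_n-\mu_n^2},\qquad r_n=\mu_n s_n-\nu_n,\qquad \{a_n,b_n\}=\text{roots of }t^2-s_nt+r_n=0,\qquad p_n=\frac{\mu_n-a_n}{b_n-a_n}.
\end{equation*}
By construction, once these quantities are well defined and satisfy $0<a_n<b_n$ and $0<p_n<1$, the resulting distribution is a genuine nonnegative law whose first, second and fourth moments are exactly $\mu_n$, $\nu_n$ and $\rho_n$, i.e. \eqref{limnew} holds (with the given $\sigma$, $c_2$ and the chosen $c_3$), and hence conditions \eqref{EJn}, \eqref{EJ2n}, \eqref{EJ4n} of Theorem~\ref{Theo3} are met.

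It then remains only to check positivity for large $n$ by tracking orders of magnitude in $\lambda_n$. From $c_3<1$ one gets $\rho_n-\nu_n^2=\rho_n(1+o(1))>0$, and from $c_2<1/2$ one gets $\nu_n-\mu_n^2=\nu_n(1+o(1))>0$, so $s_n^2=\sigma^{-2}\lambda_n^{-c_3}(1+o(1))>0$; next $c_3<2c_2$ makes $\mu_n s_n=\Theta(\lambda_n^{-1+c_2-c_3/2})$ dominate $\nu_n=\Theta(\lambda_n^{-1})$, so $r_n>0$; the inequality $c_3<2-2c_2$ yields $s_n^2\gg 4r_n$, hence two distinct positive roots $a_n\asymp r_n/s_n=\lambda_n^{-1+c_2}(1+o(1))$ and $b_n\asymp s_n=\sigma^{-1}\lambda_n^{-c_3/2}$ with $a_n\ll b_n$; finally $\mu_n-a_n=\nu_n/s_n(1+o(1))>0$ and $p_n=\nu_n/s_n^2(1+o(1))=\sigma^4\lambda_n^{-1+c_3}(1+o(1))\in(0,1)$ for $n$ large. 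This produces the required $J_n^+$. For the finitely many small indices $n$ where the construction may fail (indeed, for small $\lambda_n$ one may have $\nu_n^2>\rho_n$, so no law with exactly these moments exists) we simply define $J_n^+$ arbitrarily, since \eqref{EJn}–\eqref{EJ4n} are tail statements and only the asymptotic regime matters for the weak–convergence conclusion.

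The only delicate point — the place where the argument could go wrong — is making $a_n$, $b_n$, $p_n$ and $b_n-a_n$ all land in the correct (positive) range \emph{simultaneously} for large $n$; this is exactly what the choice $c_3\in(0,2c_2)$ secures, through the separation of scales $a_n=O(\lambda_n^{-1+c_2})\ll b_n=\Theta(\lambda_n^{-c_3/2})$ together with the vanishing mass $p_n\to0$ on the large atom. One could alternatively obtain existence abstractly from the truncated Stieltjes moment problem (a probability law on $[0,\infty)$ with prescribed $m_1,m_2,m_4$ exists as soon as the relevant Hankel forms can be rendered positive semidefinite by a suitable choice of the free moment $m_3$), but the explicit two–atom construction above is shorter and constructive, and exhibits precisely which relation between the exponents is required.
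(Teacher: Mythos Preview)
Your proof is correct, and it takes a genuinely different route from the paper. The paper simply appeals back to Table~\ref{Table1}: it observes that for the gamma, inverse Gaussian, or beta families (with $\beta=\mu/\widetilde\sigma^2$ in the last case) the limiting fourth infinitesimal moment in Table~\ref{Table1} vanishes as $\mu\to\infty$, and then sets $\mu=\lambda_n^{c_2}$ so that those same parametric families, with parameters as in the table, furnish the required $J_n^+$. Your construction is instead a bespoke two-atom law obtained by solving the moment recursion $m_{k+2}=s_n m_{k+1}-r_n m_k$ for $s_n,r_n$ from the prescribed $m_1,m_2,m_4$, and then checking positivity of $s_n^2$, $r_n$, the discriminant $s_n^2-4r_n$, and $p_n$ via the scale separation enforced by $c_3\in(0,2c_2)$. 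The identity $(\mu_n-a_n)(b_n-\mu_n)=\nu_n-\mu_n^2>0$ cleanly guarantees $a_n<\mu_n<b_n$, hence $p_n\in(0,1)$, so the verification goes through.

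What each approach buys: the paper's argument is shorter and reinforces the paper's narrative by showing that the \emph{same} natural jump distributions already analysed suffice once one lets $\mu\to\infty$; but, strictly read, those families satisfy \eqref{limnew} only up to lower-order corrections (e.g.\ for the gamma one gets $\mathbb{E}[(J_n^+)^2]=\widetilde\sigma^2\lambda_n^{-1}+\lambda_n^{2c_2-2}$), which is why the paper immediately follows with the relaxed Corollary~\eqref{newcor}. Your construction, by contrast, matches the three moments in \eqref{limnew} \emph{exactly} for all large $n$, so it satisfies Theorem~\ref{Theo3}'s hypotheses as literally stated; the price is an artificial two-point law rather than a standard family. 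Your remark that finitely many small $n$ may be handled arbitrarily is appropriate, since the conclusion \eqref{EJn}--\eqref{EJ4n} is a tail statement.
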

\begin{proof}
The proof is reported in Section \ref{AppendixD}.
\end{proof}
\begin{corollary}
The assumptions of Theorem \ref{Theo3} can be generalised to
\begin{equation}\label{newcor}
\mathbb{E}[J^+_n]=\lambda_n^{-1+c_2}+O(\lambda_n^{-1}), \qquad \mathbb{E}[(J^+_n)^2]=\Theta(\lambda_n^{-1}), \qquad \mathbb{E}[(J^+_n)^4]=o(\lambda_n^{-1}),
\end{equation}
with a   degenerate univariate negative distribution given by $\theta_n={-f_n^{-1}}\mathbb{E}[J^+_n]+O(\lambda_n^{-c_1})$, {with $f_n=\lambda_n^{-1+c_1}$.}
\end{corollary}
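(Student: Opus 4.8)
The plan is to re-run the proof of Theorem~\ref{Theo3} (Section~\ref{AppendixA}) with the exact moment identities \eqref{limnew} replaced by the asymptotic relations \eqref{newcor}, keeping track of the error terms. Write $f_n=\lambda_n^{-1+c_1}$, so that $\lambda_n f_n=\lambda_n^{c_1}\to\infty$, which is \eqref{newc}, and note that $c_1\in(0,2/3)$ together with $c_2\in(0,c_1/2)$ gives $c_1+c_2<\tfrac32 c_1<1$. For $k\in\{1,2,4\}$ decompose, using \eqref{Jnew},
\[
\mathbb{E}[J_n^k]=(1-f_n)\,\mathbb{E}[(J_n^+)^k]+f_n\theta_n^k ,
\]
and substitute $\theta_n=-f_n^{-1}\mathbb{E}[J_n^+]+g_n$, where $g_n=O(\lambda_n^{-c_1})$ has leading part $\mu\lambda_n^{-c_1}$. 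Then $f_n\theta_n=-\mathbb{E}[J_n^+]+f_n g_n$, and a one-line expansion gives $\mathbb{E}[J_n]=f_n\bigl(g_n-\mathbb{E}[J_n^+]\bigr)$; multiplying by $\lambda_n$ and using $\lambda_n^{c_1}\mathbb{E}[J_n^+]=\lambda_n^{-1+c_1+c_2}+O(\lambda_n^{-1+c_1})\to0$ (this is exactly where $c_1+c_2<1$ is needed) leaves $\lambda_n\mathbb{E}[J_n]\to\mu\in[0,\infty)$, i.e. condition \eqref{EJn}.

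For \eqref{EJ2n} and \eqref{EJ4n} I would observe that $-f_n^{-1}\mathbb{E}[J_n^+]=-\lambda_n^{c_2-c_1}+O(\lambda_n^{-c_1})$, hence $\theta_n=-\lambda_n^{c_2-c_1}+O(\lambda_n^{-c_1})$, and since $c_2-c_1>-c_1$ the first term dominates, so $\theta_n^k=(-1)^k\lambda_n^{k(c_2-c_1)}$ up to strictly lower-order corrections. Consequently
\[
\lambda_n f_n\theta_n^k=\lambda_n^{c_1}\theta_n^k=O\bigl(\lambda_n^{kc_2-(k-1)c_1}\bigr),
\]
whose exponent is $2c_2-c_1<0$ for $k=2$ (by $c_2<c_1/2$) and $4c_2-3c_1<2c_1-3c_1=-c_1<0$ for $k=4$; in both cases the negative atom contributes nothing in the limit, and the same estimates absorb the lower-order corrections. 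The surviving contributions are $(1-f_n)\lambda_n\mathbb{E}[(J_n^+)^2]\to\sigma^2>0$ (from $\mathbb{E}[(J_n^+)^2]=\Theta(\lambda_n^{-1})$ and $f_n\to0$) and $(1-f_n)\lambda_n\mathbb{E}[(J_n^+)^4]\to0$ (from $\mathbb{E}[(J_n^+)^4]=o(\lambda_n^{-1})$), which are \eqref{EJ2n} and \eqref{EJ4n}. Finally, since $c_2-c_1>-c_1$ the term $-\lambda_n^{c_2-c_1}$ dominates the $O(\lambda_n^{-c_1})$ correction, so $\theta_n<0$ for $n$ large and \eqref{Jnew} is a bona fide distribution; and since $c_2-c_1<0$ and $-c_1<0$ one has $\theta_n\to0$, i.e. the negative amplitude vanishes. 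With \eqref{EJn}--\eqref{EJ4n} verified, the theorem of \cite{GikhmanSkorokhod} (equivalently Theorem~\ref{Theo1} applied with Brownian driving noise) yields weak convergence of $X_n$ to the Gaussian OU process with mean \eqref{meanOU} and variance \eqref{varOU}.

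The one genuinely delicate point is the bookkeeping in \eqref{EJn}: the $O(\lambda_n^{-1})$ slack permitted in $\mathbb{E}[J_n^+]$ by \eqref{newcor} is amplified by the factor $f_n^{-1}=\lambda_n^{1-c_1}$ appearing in $\theta_n$ to order $O(\lambda_n^{-c_1})$, i.e. to the very same order as the intended correction $\mu\lambda_n^{-c_1}$. One must therefore absorb this slack into that correction and check that, after the final multiplication by $\lambda_n f_n=\lambda_n^{c_1}$, what remains is an actual limit $\mu\in[0,\infty)$ rather than a merely bounded oscillating quantity; equivalently, the $O(\lambda_n^{-c_1})$ in the stated form of $\theta_n$ should be read as $\mu\lambda_n^{-c_1}+o(\lambda_n^{-c_1})$. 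Everything else is the routine substitution of \eqref{newcor} into the computation behind Theorem~\ref{Theo3}, the constraints $c_1\in(0,2/3)$ and $c_2\in(0,c_1/2)$ being precisely what forces every power of $\lambda_n$ carried by the negative atom to be strictly negative.
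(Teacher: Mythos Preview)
Your approach is correct and is essentially the same as the paper's, which simply says the result follows by mimicking the proof of Theorem~\ref{Theo3} and noting that the term $-\lambda_n^{-c_1+c_2}$ in $\theta_n$ equals $-f_n^{-1}\mathbb{E}[J_n^+]$; you carry out exactly this mimicking in full detail. Your identification of the delicate bookkeeping in \eqref{EJn}---that the $O(\lambda_n^{-c_1})$ in $\theta_n$ must really be $\mu\lambda_n^{-c_1}+o(\lambda_n^{-c_1})$ for an actual limit $\mu$ to emerge, and likewise that $\Theta(\lambda_n^{-1})$ for the second moment must be read as yielding a genuine limit $\sigma^2$---is a valid observation that the paper does not make explicit.
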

\begin{proof}
The result follows mimicking the proof of Theorem \ref{Theo3} and noting that $\lambda_n^{-c_1+c_2}$ in $\theta_n$ is in fact $-{f_n^{-1}}\mathbb{E}[J^+_n]$. 
\end{proof} 
{
\begin{remark}
While the conditions on the second and fourth moment of $J^+_n$ in \eqref{newcor} are the same as those in \eqref{condJn}, this is not the case for the first moment. While in the absence of negative jumps, the mean of the positive jumps has to go to zero as the same rate as $\lambda_n$ to meet \eqref{EJn}, in the presence of negative jumps, $\mathbb{E}[J^+_n]$ goes to zero slower than $\lambda_n$, yielding  $\lim\limits_{n\to\infty}\lambda_n\mathbb{E}[J^+_n]=\lim\limits_{n\to\infty}\lambda_n^{c_2}=\infty$. However, even if $\lambda_n\mathbb{E}[J^+_n]$ diverges as $n\to\infty$, condition \eqref{EJn} is fulfilled thanks to the presence of negative jumps, as shown in Theorem \ref{Theo3}. Examples of nonnegative distributions fulfilling \eqref{limnew} and \eqref{newcor} but not \eqref{condJn} are given in the proof of Corollary \ref{cor2} in Section \ref{AppendixD}, and include the gamma and inverse gaussian distributions with $\mu=\lambda_n^{c_2}$
and $\widetilde\sigma^2$ not depending on $\mu$ (or being constant as $\mu\to\infty$), and the beta distribution with $\mu=\lambda_n^{c_2}$ and $\beta=\mu/\widetilde\sigma^2=\lambda_n^{c_2}/\widetilde\sigma^2$ and parameters given in Table \ref{Table1}. Thus, choosing any of these nonnegative distributions and nonnegative jumps $J^-_n$ distributed as in Theorem \ref{Theo3} allows to obtain a diffusion limit. 
\end{remark}}
The result of Theorem \ref{Theo3} can be explained as follows.  Since $J_n$ can assume negative values, we could rewrite the stochastic differential equation \eqref{sde} as 
\begin{equation}\label{sde3}
dX_n(t)=-\alpha X_n(t)dt + E_n dN_n^+(t)+I_n dN_n^{-}(t),
\end{equation}
where $E_n=J_n^+$, $I_n=J^-_n=\theta_n<0$ and $N_n^+(t)$  and $N_n^-(t)$ are Poisson processes with intensities $\lambda_n (1-f_n)$ and $\lambda_n f_n$, respectively. {This process corresponds to a shot noise process with real valued shot effects, known as generalised Stein's model \cite{Stein1965}, with random jump amplitudes $E_n$ and $I_n$ modelling the excitatory and the inhibitory components, respectively, and intensities
\[
\lambda_n (1-f_n)\to  \infty \qquad \lambda_n f_n \to \infty, \quad \textrm{ as } n\to \infty.
\]  
The crucial assumption enabling the convergence to the Gaussian OU process is \eqref{newc}. Indeed, despite the probability of having negative jumps vanishes since $f_n\to 0$ as $n\to\infty$, {it guarantees that} the frequency of the negative jumps explodes,  $\lambda_n f_n\to \infty$, enabling thus the diffusion limit. {The weak convergence of a {sequence of Stein processes with constant jump amplitudes $\to$ sequence of Stein processes (i.e. generalised Stein processes with constant jump amplitudes $E_n$ and $I_n$)} to an OU process have been already shown in \cite{Lansky1984}, and it can be seen as a direct consequence of {the central limit theorem} for a shot effect assuming real values \cite{Rice1977}.}

\section{Discussion}
{Poissonian shot noise processes are used to model single neuronal membrane potential, synaptic input currents impinging on the neuron and conductances  in single neuron modelling. When the positive jumps (excitatory inputs) impinge on the neuron with frequency going to infinity and jump amplitudes going to zero, diffusion approximations have been considered to approximate and/or compare them  with Gaussian OU processes \cite{DL,RichardsonGerstnerChaos,MelansonLongtin2019,RichardsonSwarbrick2010}, even when the jump amplitudes are exponential distributed.  

{If a sequence of jump processes converges weakly to a diffusion process, the notions of weak convergence, diffusion approximation and usual approximation yield the same limit diffusion process, which coincides with that from the Gaussian approximation if the diffusion process is Gaussian. However, if the limit process obtained via the weak convergence approach is not continuous, as it happens here for the {nonnegative} shot noise, only the notion of diffusion approximation detects this via a non-vanishing fourth infinitesimal moment, yielding a warning in the choice of approximating the initial process by a diffusion.} While the conditions guaranteeing the weak convergence of stochastic process may be too impractical or technical to be investigated, when dealing with one-dimensional processes, checking the convergence of the first two infinitesimal moments and the vanishing of the fourth {infinitesimal moment (as proposed in \cite{Ricciardi})}, represents an {equivalent (thanks to the results in \cite{GikhmanSkorokhod})}, intuitive and powerful tool which could be  adopted when performing diffusion approximations, {improving the reliability and quality of the approximation.}}

{Here, we prove that the {nonnegative} shot noise converges weakly to a L\'evy-driven {non-Gaussian} OU process {and we characterised the limiting L\'evy measure based on the underlying jump distributions}, {explicitly deriving mean, variance and characteristic function of the limit process as well as providing a R-package for its exact simulation. The derived non-Gaussian OU processes outperform the Gaussian OU in terms of approximation of the shot noise.
}  From a  modelling point of view, the derived process could then be used to replace the shot noise process modelling membrane voltages, synaptic input currents or conductances, improving the existing results on single-neuron modelling and their firing statistics  {based on the Gaussian OU process as a result of the usual and Gaussian approximations.}  A qualitative study of the introduced improvement needs to be carried {out}. 

{The lack of a limiting diffusion process is not specific for the  shot noise process. Similar results hold for all models involving only nonnegative and/or nonpositive random variables for the jumps, e.g.  neuronal models with synaptic reversal potentials, see e.g. \cite{Cupera2014}. For example, the conditions guaranteeing the diffusion approximation of the jump model in \cite{LanskyLanska1987} (cf. Theorem 1) are not met, meaning that the provided diffusion {process cannot be obtained via a diffusion approximation, but only as a usual approximation.}

Several generalisations of \eqref{1} have been proposed in the literature, e.g. non-Poisson inputs,  non-renewal dynamics, non-stationary dynamics, general shot effect $g(t)$ instead of the considered $g(t)=J^{[k]}e^{-\alpha t}$,  all under the name of generalised shot noise process \cite{Rice1977}, or the recently-proposed random process with immigration \cite{Iksanovetal2017}. All are characterised by being piecewise-deterministic Markov processes, also known as stochastic hybrid systems{, i.e. processes with deterministic behavior between jumps}. 
 Depending on the underlying conditions for the generalised shot noise processes (e.g. the shot effects, and thus the {shot noises themselves}, are commonly assumed to  assume values in $\mathbb{R}$, see e.g.  \cite{PangZhou2018,Rice1977,KluppelbergKuhn2004}) and on how they are drifted, rescaled in time and normalized, functional central limit theorems have been proved, which have Gaussian processes \cite{Rice1977,Iksanovetal2018,Papoulis1971}, self-similar Gaussian processes \cite{KluppelbergMikosch1995}, infinite-variance stable processes \cite{Kluppelbergetal2003}, fractional Brownian motion \cite{KluppelbergKuhn2004}, stable (non-Gaussian) processes (cf. \cite{Iksanovetal2017} and references therein), stationary \cite{Iksanovetal2017b} or non-stationary \cite{PangZhou2018} stochastic processes as limits. Our result contributes to this discussion, under the assumption of a nonnegative and non drifted shot noise process.} 

\section{Proofs}
\subsection{Proof of Theorem \ref{Theo1}}\label{AppendixB}
{To prove $L_n\stackrel{\mathcal{L}}{\to} L_\infty$ we rely on the convergence of characteristic triplets, as suggested in \cite{Jacod}. By looking at the characteristic function of $L_n$ in  \eqref{charL}, we see that the characteristic triplet of $L_n$ is given by $(0,0,\lambda_nF_{J_n})$, where the first, second and third entries represent the drift (here null), the diffusion component (here null),  and the L\'evy measure, here $\lambda_nF_{J_n}$, respectively. Under conditions \eqref{lambda} and \eqref{weak}, 
the characteristic triplet converges weakly to $(0,0,\nu)$ or, analogously,} the characteristic function of $L_n$ computed from \eqref{charL}  converges  weakly to 
\[
\mathbb{E}[e^{iuL_\infty(t)}]=\exp\left(t\int_{{0}}^{{\infty}}(e^{iux}-1)\nu(dx)\right).
\]
Hence, $L_\infty$ is a L{\'evy} process with initial value 0, drift 0, no Gaussian part and L\'evy measure $\nu$ \eqref{weak}.} 
{From its definition, $X_n$ belongs to $\mathcal{D}(\mathbb{R^+})$, which is a Polish space with the Skorohod topology \cite{BilConv}.} Since $X_n$ is a continuous functional of $L_n$, see \eqref{sde}, {and $x_{0n}=x_0=y_0$ (for hypothesis)}, the weak convergence of $L_n$ implies the weak convergence of $X_n$ {from} the continuous mapping theorem. In particular, 
\begin{eqnarray*}
\mathbb{E}[e^{iuX_n(t)}]&=&\exp\left(\int_{{0}}^{{\infty}}\int_0^t (e^{iue^{-\alpha y}x}-1)dy \lambda_nF_{J_n}(dx)+iux_0e^{-\alpha t}\right)\\
&\to&
\exp\left(\int_{{0}}^{{\infty}}\int_0^t (e^{iue^{-\alpha y}x}-1)dy \nu(dx)+iux_0e^{-\alpha t}\right)
\end{eqnarray*}
{guarantees} that the limit process of $X_n$ is $Y$ given by \eqref{Levy}, as $n\to\infty$.

\subsection{Proof of Theorem \ref{Theonew}}\label{AppendixC}
{Under the assumption of nonnegative jumps, for each $n$, the trajectories of $L_n$ are non-decreasing. In particular if $A$ is the following closed set in the Skorohod topology $\mathcal{D}(\mathbb{R^+})$,
\[
A := \{w\in\mathcal{D}(\mathbb{R}^+), \inf_{s\in[0,1]} w(s) = w(0)\},
\] 
then the law of $L_n$ is such that $\mathbb{P}(L_n\in A)=1$. When considering a Brownian motion {$B=\{B_t, t\geq 0\}$}, or more generally any regular diffusion, we have $\mathbb{P}(B\in A)=0$. Hence, $\lim\sup_n\mathbb{P}(L_n\in A)=1>\mathbb{P}(B\in A)$, i.e., statement 3 of the  Portmanteau theorem \cite{BilConv} is not met, implying that the equivalent result $L_n\stackrel{\mathcal{L}}{\to} B$ is not met either. }

\subsection{Proof of Theorem \ref{Theo3}}\label{AppendixA}
First, note that $\mathbb{E}[J_n]\to 0$ as $n\to\infty$, fulfilling \eqref{lim}. Then, by assumptions on the moments of $J^+_n$ and by definition of $\theta_n, f_n$, we have that
\begin{eqnarray*}
\lambda_n\mathbb{E}[J_n]&=&\mu-\lambda_n^{-1+c_1+c_2}=\mu+o(\lambda_n^{-1+c_1+c_2})\\
\lambda_n\mathbb{E}[J_n^2]&=&\sigma^2-\sigma^2\lambda_n^{-1+c_1}+\lambda_n^{-c_1}(-\lambda_n^{c_2}+\mu)^2=
\sigma^2+o(\lambda_n^{-1+c1})+o(\lambda_n^{-c_1+2c_2})\\
\lambda_n\mathbb{E}[J_n^4]&=&\lambda_n^{-c_3}-\lambda_n^{-1+c_1-c_3}+\lambda_n^{-3c_1}(-\lambda_n^{c_2}+\mu)^4=
o(\lambda_n^{-c_3})+o(\lambda_n^{-1+c_1-c_3})+o(\lambda_n^{-3c_1+4c_2}).
\end{eqnarray*}
Since $\lambda_n\to\infty$ as $n\to\infty$, {having $-1+c_1+c_2< 0, -1+c_1< 0, -c_1+2c_2< 0, -c3<0, -1+c_1-c_3<0$ and $-3c_1+4c_2<0$  guarantees that all infinitesimal terms in the expressions above vanish, implying that
conditions \eqref{EJn}-\eqref{EJ4n} are satisfied. This is achieved when $c_1\in(0,2/3),c_2 < c_1/2$ and $ c_3>0$. }

\subsection{Proof of Corollary \ref{cor2}}\label{AppendixD}
Looking at conditions \eqref{limnew}, we see that we now require $\lambda_n\mathbb{E}[J^+_n]\to\infty$, $\lambda_n\mathbb{E}[(J^+_n)^2]=\sigma^2$ and  $\lambda_n\mathbb{E}[(J^+_n)^4]=0$ as $n\to\infty$. Intuitively, from the limiting results reported in Table \ref{Table1} for the gamma {(for $\widetilde\sigma^2$ not linearly depending on $\mu$)}, inverse Gaussian and beta distribution (setting $\beta=\mu/\widetilde\sigma^2$), we  see that these conditions could be met  if $\mu\to\infty$, {as long as $\widetilde\sigma^2$ either does not depend on $\mu$ or, when it does, it remains constant as $\mu\to\infty$.} 
 In particular, one can easily prove that these distributions with $\mu={\lambda_n^{c_2}}$ fulfil the conditions in \eqref{limnew} in Theorem \ref{Theo3}.  Other random variables fulfilling these requirements are the generalised gamma and the beta prime distribution (results not shown). 
 \appendix

\section{}\label{AppA}
{When expressing $\mu$ and $\sigma^2$ as a function of $\lambda_n$ and of the parameters of the underlying jump distributions, looking at Table \ref{Table1}, we get
\begin{eqnarray*}
J_n\sim Bernoulli(p) &&
\mu=\sigma^2=\lambda_n p;\\
J_n \sim Poisson (\widetilde \lambda)&&
\mu=\sigma^2=\lambda_n\widetilde\lambda;\\
J_n\sim Gamma(\textrm{shape}\ \widetilde\alpha, \textrm{rate}\ \beta)&&\mu=\lambda_n \frac{\widetilde\alpha}{\beta}, \qquad \sigma^2=\lambda_n \frac{\widetilde\alpha(1+\widetilde\alpha)}{\beta^2};\\J_n \sim IG(\textrm{mean}\ \widetilde\mu, \textrm{shape}\ \widetilde\lambda)&&
\mu=\lambda_n \widetilde\mu, \qquad \sigma^2=\lambda_n \frac{\widetilde\mu^3}{\widetilde\lambda};\\
J_n\sim Beta(\textrm{shape}\ \widetilde\alpha, \textrm{shape}\ \beta)&&
\mu=\lambda_n\frac{\gamma}{\beta},\qquad \sigma^2=\lambda_n\frac{\gamma}{\beta(\beta+1)}.
\end{eqnarray*}
The results for $J_n\sim \chi^2(k)$ can be immediately recovered from the Gamma distribution by setting $\beta=1/2, k=2\widetilde\alpha$, and noting that $\sigma^2\approx 2\lambda_n\widetilde\alpha=2\lambda_n k$ as $\widetilde\alpha\to 0$ as $\lambda_n\to\infty$, see Table \ref{Table1}. }
}

{
\section{Limit L\'evy densities}\label{prooflevy}
When $J_n\sim Bernoulli(p)=Bernoulli(\mu/\lambda_n)$, a jump of amplitude one happens only when a success is observed, i.e. $\delta(x-1)$, with probability $\mu/\lambda_n$. Alternatively, a jump of amplitude zero, $\delta(x)$, is observed with probability $1-\mu/\lambda_n$. However, having null amplitude, it does not make the process jump. Thus
\[
\lambda_n f_{J_n}(x)=\lambda_n\left(\frac{\mu}{\lambda_n}\delta(x-1)+(1-\frac{\mu}{\lambda_n})\delta(x)\right)\to \mu\delta(x-1), \qquad \textrm{ as } n\to\infty,
\]
where we used the fact that $\lambda_n\delta (x)\to0$, since we have infinitely many jumps of zero amplitude. \\
When $J_n\sim Poi(\mu/\lambda_n)$,
\[
\lambda_n f_{J_n}(x)=\lambda_n\frac{\exp(-\mu/\lambda_n)(\mu/\lambda_n)^x}{x!} \to \mu\delta(x-1), \qquad \textrm{ as } n\to\infty,
\]
noting again that $\lambda_n\delta(x)\to 0$, as before, and $\lambda_n^{1-x}\to 0$ for $x=2,3,\ldots,$.\\
When $J_n\sim \Gamma(\widetilde\alpha,\beta)$ with shape parameter $\widetilde\alpha=\mu\beta/\lambda_n$ and rate parameter $\beta=\mu/\widetilde\sigma^2$, using the recursive property of the gamma function $\Gamma(z+1)=z\Gamma(z)$, the fact that $\Gamma(1)=1$, $\widetilde\alpha\to 0$ and $\lambda_n\widetilde\alpha\to\mu^2/\widetilde\sigma^2$ as $\lambda_n\to\infty$, we obtain 
\[
\lambda_n f_{J_n}(x)=\lambda_n\frac{\beta^{\widetilde\alpha}}{\Gamma(\widetilde\alpha)}x^{\widetilde\alpha-1}e^{-\beta x}
=\lambda_n\frac{\widetilde\alpha \beta^{\widetilde\alpha}}{\Gamma(\widetilde\alpha+1)}x^{\widetilde\alpha-1}e^{-\beta x}
\to \frac{\mu^2}{\sigma^2}x^{-1}e^{-\mu x/\widetilde\sigma^2}, \qquad \textrm{ as } n\to\infty.
\]
When $\beta=1/2$, i.e. $\widetilde\sigma^2=2\mu$, we recover the results for $J_n\sim \chi^2(k)$, with $k=\mu/(2\lambda_n)$.\\
When $J_n\sim IG(\widetilde\mu,\widetilde\lambda)$ with mean parameter $\widetilde\mu=\mu/\lambda_n$ and shape parameter $\widetilde\lambda=\mu^3/(\lambda_n^2\widetilde\sigma^2)$, we have that $\widetilde\mu\to 0$ and $\widetilde\lambda/\widetilde\mu^2\to\mu/\widetilde\sigma^2$. Therefore,
\begin{eqnarray*}
\lambda_n f_{J_n}(x)&=&\lambda_n\sqrt{\frac{\widetilde\lambda}{2\pi x^3}}\exp\left(-\frac{\widetilde\lambda(x-\widetilde\mu)^2}{2\widetilde\mu^2x}\right)=\sqrt{\frac{\mu^3}{2\pi\widetilde\sigma^2 x^3}}\exp\left(-\frac{\mu(x-\widetilde\mu)^2}{2\widetilde\sigma^2 x}\right)\\
&\to&\sqrt{\frac{\mu^3}{2\pi\widetilde\sigma^2 x^3}}\exp\left(-\frac{\mu x}{2\widetilde\sigma^2}\right), \qquad \textrm{ as } n\to\infty.
\end{eqnarray*}
Finally, when $J_n\sim Beta(\widetilde\alpha,\beta)$ with shape parameters $\widetilde\alpha=\mu \beta/\lambda_n$ and $\beta$, expressing the beta function as a function of the gamma function and using again the previous recursive formula, we have
\[
B(\widetilde\alpha,\beta)=\frac{\Gamma(\widetilde\alpha)\Gamma(\beta)}{\Gamma(\widetilde\alpha+\beta)}=\frac{\Gamma(\widetilde\alpha+1)\Gamma(\beta)}{\widetilde\alpha\Gamma(\widetilde\alpha+\beta)}.
\]
As $n\to\infty$, $\lambda_n\to\infty, \widetilde\alpha\to 0, \Gamma(\widetilde\alpha+1)\to 1, \Gamma(\widetilde\alpha+\beta)\to \Gamma(\beta)$ and 
 $\lambda_n /B(\widetilde\alpha,\beta)\to \mu\beta$, leading to
\[
\lambda_n f_{J_n}(x)= \lambda_n \frac{x^{\widetilde\alpha-1}(1-x)^{\beta-1}}{B(\widetilde\alpha,\beta)}\to\mu\beta x^{-1}(1-x)^{\beta}, \qquad \textrm{as } n\to\infty.
\]
}
\section*{Acknowledgements}
The authors would like to thank Martin Jacobsen for a fruitful correspondence on L\'evy processes. {The authors are grateful to Yan Qu for having provided us the Matlab codes for the exact simulation of the OU-Gamma and OU-IG processes.} 
This work was supported by the Austrian Exchange Service (OeAD-GmbH), bilateral project
CZ 19/2019 and by the Austrian Science Fund (FWF), project Nr. I 4620-N.


\begin{thebibliography}{10}
\expandafter\ifx\csname url\endcsname\relax
  \def\url#1{\texttt{#1}}\fi
\expandafter\ifx\csname urlprefix\endcsname\relax\def\urlprefix{URL }\fi
\expandafter\ifx\csname href\endcsname\relax
  \def\href#1#2{#2} \def\path#1{#1}\fi


\bibitem{Schottky1918}
W. Schottky, Spontaneous current fluctuations in electron streams, Ann. Physics. 57 (1918)  541--567.

\bibitem{Iksanovetal2017}
A. Iksanov, A. Marynych, Asymptotics of random processes with immigration i: Scaling limits, Bernoulli. 23 (2017) 1233--1278.

\bibitem{Stein1965}
R.B. Stein, A theoretical analysis of neuronal variability. Biophys. J. 5(2) (1965) 173--194.


\bibitem{TuckwellBook}
H.C. Tuckwell, Introduction to {T}heoretical {N}eurobiology, Vol.2: Nonlinear
  and {S}tochastic {T}heories. Cambridge Univ. Press, Cambridge; 1988.

\bibitem{GerstnerKistler2002}
W. Gerstner, W.M. Kistler, Spiking Neuron Models: Single neurons, populations, plasticity. Cambridge: Cambridge University Press, 2002.

\bibitem{DL}
F. Droste, B.  Lindner, Exact analytical results for integrate-and-fire neurons
  driven by excitatory shot noise, J. Comput. Neurosci. 43 (2017)  81--91.

\bibitem{Olmietal2017}
S. Olmi, D. Angulo-Garcia, A. Imparato A, et al., Exact firing time statistics of neurons driven by discrete inhibitory noise.  Sci. Rep. 7 (2017) 1577.


\bibitem{HohnBurkitt}
N. Hohn,  A.N. Burkitt, 
Shot noise in the {L}eaky {I}ntegrate-and-{F}ire neuron, Phys. Rev. E 63 (2001) 031902.


\bibitem{Iyeretal2013}
R. Iyer, V. Menon, M. Buice, C. Koch, S. Mihalas, The influence of synaptic weight distribution on neuronal population dynamics,  PLoS Comput. Biol.  9(10) (2013) e1003248. 
\bibitem{RichardsonGerstnerChaos}
M.J.E. Richardson, W. Gerstner, Statistics of subthreshold neuronal voltage
  fluctuations due to conductance-based synaptic shot noise, Chaos. 16 (2006)  026106.
 
  
\bibitem{PangZhou2018}
G. Pang, Y.  Zhou. Functional limit theorems for a new class of non-stationary
  shot noise processes, Stoch.Proc. Appl. 128 (2018)  505--544.


\bibitem{Kurtz1970}
T.G. Kurtz, Solutions of ordinary differential equations as limits of pure
  jump {M}arkov processes, J. Appl. Probab. 7 (1970) 49--58.

\bibitem{Kurtz1971}
T.G. Kurtz, Limit theorems for a sequence of jump {M}arkov processes
  approximating ordinary differential equations, J. Appl. Probab.  8(2) (1971) 344--356.

\bibitem{PTW}
K. Pakdaman, M. Thieullen, G. Wainrib, Fluid limit theorems for stochastic
  hybrid systems with application to neuron models, Adv. in {A}ppl. {P}robab. 42 (2010)  761--794.

\bibitem{RTW}
M.G. Riedler, M. Thieullen, G. Wainrib, Limit theorems for
  infinite-dimensional piecewise deterministic {M}arkov processes.
  {A}pplications to stochastic excitable membrane models, Electron. {J}.{P}rob.   17 (2012) 1--48.

\bibitem{SL2017}
S. Ditlevsen, E. L{\"o}cherbach, Multi-class oscillating systems of interacting
  neurons, Stoch. Proc. Appl.  127 (2017) 1840--1869.

\bibitem{BilConv}
P. Billingsley,  Convergence of {P}robability {M}easure. 2nd ed. Vol. 493
  of Probability and Mathematical Statistics. New York: Wiley {I}nterscience, 1999.


\bibitem{Jacod}
J. Jacod,  A.N. Shiryaev. Limit theorem for stochastic processes, 2nd ed.  Heidelberg: Springer Verlag, 2002.

\bibitem{Ricciardi}
L.M. Ricciardi. Diffusion Processes and Related Topics in Biology. Vol 14 of
  Lecture notes in Biomathematics. Berlin: Springer Verlag,  1977.

\bibitem{GikhmanSkorokhod}
I.I. Gikhman, A.V. Skorohod. The Theory of Stochastic Processes II. Berlin Heidelberg:   Springer-Verlag,  2007.
%

\bibitem{LanskyLanska1987}
P. Lansky, V. Lanska, Diffusion approximation of the neuronal model with
  synaptic reversal potentials, Biol. Cybern. 56 (1987) 19--26.


\bibitem{TSJ2014}
M. Tamborrino, L. Sacerdote, M. Jacobsen,  Weak convergence of marked point
  processes generated by crossings of multivariate jump processes.
  {A}pplications to neural network modeling,  Physica D. 288 (2014) 45--52.
 
\bibitem{KarlinTaylor}
S. Karlin, H.M. Taylor. A second course in stochastic processes. 2nd ed.  Academic {P}ress, 1981.


\bibitem{Pawula}
R.F. Pawula, Generalizations and extensions of the
  {F}okker-{P}lanck-{K}olmogorov equations,  IEEE Trans. Inform. Theory. 13 (1967) 33--41.

\bibitem{CapocelliRicciardi1971}
R.M. Capocelli, L.M. Ricciardi, Diffusion approximation and the first passage time for a model neuron. Kybernetik. 8 (1971) 214--223. 


\bibitem{Ricciardi1976}
L.M. Ricciardi, Diffusion approximation for a multi-input model neuron, Biol Cybern.  24 (1976)  237--240.


\bibitem{Walsh} J.B. Walsh, Well-timed diffusion approximation, Adv. Appl.
Probab. 13 (1981) 358-368.

\bibitem{SchwalgerDrosteLindner}
T. Schwalger, F. Droste, B.  Lindner, Statistical structure of neural spiking under non-Poissonian or other non-white stimulation, J. Comput. Neurosci.  39(1) (2015) 29--51.


\bibitem{levyOU}
O.E. Barndorff-Nielsen, N. Shephard, Non-{G}aussian
  {O}rnstein-{U}hlenbeck-based models and some of their uses in financial
  economics, J. Royal Stat. Soc. B.  63 (2001) 167--241.
%
\bibitem{Bertoin1996}
J. Bertoin. L{\'e}vy Processes. Cambridge University Press, 1996.


\bibitem{Sato1999}
K. Sato. L{\'e}vy Processes and Infinitely Divisible Distributions. Cambridge
  University Press, 1999.

\bibitem{MelansonLongtin2019}
A. Melanson, A. Longtin, Data-driven inference for stationary jump-diffusion
  processes with application to membrane voltage fluctuations in pyramidal
  neurons, J. Math. Neurosc. 9(6) (2019).


\bibitem{R}
{R Development Core Team}, \href{http://www.R-project.org/}{R: A Language and
  Environment for Statistical Computing}, R Foundation for Statistical
  Computing, Vienna, Austria, {ISBN} 3-900051-07-0. 2011.
\newline\urlprefix\url{http://www.R-project.org/}


\bibitem{Cupera2014}
J. Cupera, Diffusion approximation of neuronal models revisited, Math. Biosci. Eng. 11 (1) (2014) 11--25.


\bibitem{Ross}
S. Ross. Stochastic processes. 2nd ed. New York: Wiley,  2008.

\bibitem{BarndorffNielsenShephard}
O.E. Barndorff-Nielsen, N. Shephard. L\'evy driven volatility models, 2020.



\bibitem{ContTankov2004}
Cont R, Tankov P. Financial Modeling with Jump Processes. Boca Raton: Chapman \& Hall CRC Press; 2004.
%
%


\bibitem{Quetal2020}
Y. Qu, A. Dassios, H. Zhao, Exact Simulation of Gamma-driven Ornstein-Uhlenbeck Processes with
Finite and Infinite Activity Jumps, J. Oper. Res. Soc., doi:10.1080/01605682.2019.1657368, 2019.


\bibitem{Quetal2021}
Y. Qu, A. Dassios, H. Zhao, Exact simulation of Ornstein-Uhlenbeck tempered stable processes, J. Appl. Prob., 2021.

\bibitem{Brodericketal}
T. Broderick, M. I. Jordan, J. Pitman, Beta processes, stick-breaking and power laws, Bayes. Anal. 7(2) (2012), 439--476.

\bibitem{CasellaBerger}
G. Casella, R.L. Berger. Statistical Inference, Duxbury, 2nd Ed., 2008. 

\bibitem{DassiosJang2005}
A. Dassios, J.W. Jang, Kalman-Bucy filtering for linear systems driven by the Cox process with shot noise intensity and its application to the pricing of reinsurance contracts, J. Appl. Prob. 42(1) (2005) 93--107.
%

\bibitem{Rosinski2001}
J. Rosi\'nski. Series representations of L\'evy processes from the perspective of point processes. In: O. Barndorff-Nielsen, S. Resnick, and T. Mikosch (eds),L\'evy Processes, Birkh\"auser, Boston, MA (2001) 401--415.

\bibitem{GlassermanLiu2010}
P. Glasserman, Z. Liu, Sensitivity estimates from characteristic functions,  Oper. Res. 58(6) (2010) 1611--1623.

\bibitem{Chenetal2012}
Z. Chen, L. Feng, X. Lin, Simulating L\'evy processes from their characteristic functions and financial applications, ACM T. Model. Comput. S. 22(3) (2012) 1--26.
\bibitem{Rcpp}
D. Eddelbuettel, R. Fran\c ois, Rcpp: Seamless R and C++ integration, J. Stat. Softw. 40(8) (2011) 1--18.

\bibitem{RichardsonSwarbrick2010}
M. J. Richardson, R.  Swarbrick, Firing-Rate Response of a Neuron Receiving Excitatory and Inhibitory Synaptic Shot Noise, Phys. Rev. Lett., 105 (2010) 178102. 




\bibitem{Lansky1984}
P. Lansky, On approximations of {S}tein's neuronal model, J. Theoret. Biol 107 (1984)  631--647.

\bibitem{Rice1977}
J. Rice, On generalized shot noise, Adv. Appl. Prob.  9 (1977)  553--565.


\bibitem{KluppelbergKuhn2004}
C. Kl{\"u}ppelberg, C. K\"uhn,  Fractional {B}rownian motion as a weak limit of
  {P}oisson shot noise processes with applications to finance,  Stoch. Proc.
  Appl. 113 (2004) 333--351.

\bibitem{Iksanovetal2018}
A. Iksanov,  W. Jedidi, F. Bouzeffour,  Functional limit theorems for the number
  of busy servers in a {G}/{G}/$\infty$ queue. Stoch. Proc. Appl. 55 (2018) 
  15--29.

\bibitem{Papoulis1971}
A. Papoulis, High density shot noise and {G}aussianity, J. Appl. Probab.  18 (1971)  118--127.

\bibitem{KluppelbergMikosch1995}
C. Kl\"uppelberg, T. Mikosch, Explosive {P}oisson shot noise processes with
  applications to risk reserves, Bernoulli. 1 (1995) 125--147.

\bibitem{Kluppelbergetal2003}
C. Kl{\"u}ppelberg, T. Mikosch, A.~Sch\"arf,  Regular variation in the mean and
  stable limits for poisson shot noise, Bernoulli. 9 (2003) 467--496.

\bibitem{Iksanovetal2017b}
A. Iksanov, A. Marynych, M. Meiners, Asymptotics of random processes with  immigration ii: Convergence to stationarity,  Bernoulli. 23 (2017) 1279--1298.


\end{thebibliography}
\end{document}